\theoremstyle{plain}
\newcommand{\C}{\mathbb{C}}
\newcommand{\Proj}{\mathbb{P}}
\newcommand{\bpartial}{\bar{\partial}}
\newcommand{\dif}{\textsf{Diff}}
\newcommand{\crit}{\textsf{crit}}
\newcommand{\sing}{\textsf{Sing}}
\newcommand{\cod}{\textsf{cod}}
\newtheorem{theorem}{Theorem}[section]
\newtheorem{theoremno}{Theorem}
\newtheorem{lemma}[theorem]{Lemma}
\newtheorem{proposition}[theorem]{Proposition}
\newtheorem{corollary}{Corollary}
\theoremstyle{definition}
\newtheorem{definition}{Definition}[section]
\newtheorem{example}{Example}[section]
\newtheorem{remark}{Remark}[section]
\date{}
\title{Local normal forms of singular Levi-flat hypersurfaces }
\author{Arturo Fern\'andez-P\'erez}
\address[A. Fern\'andez-P\'erez]{Departamento de Matem\'atica, Universidade Federal de Minas Gerais, UFMG}
\curraddr{Av. Ant\^onio Carlos 6627, 31270-901, Belo Horizonte-MG, Brazil.}
\email{fernandez@ufmg.br}
\author{Gustavo Marra}
\address[Gustavo Marra]{Departamento de Matem\'atica, Universidade Federal de Itajub\'a}
\curraddr{Rua Irm\~a Ivone Drumond 200, 35903-087, Itabira-MG, Brazil.}
\email{marra@unifei.edu.br}
\thanks{This work is partially supported by CNPq Brazil grant number 427388/2016-3}
\subjclass[2010]{Primary 32V40 - 32S65}
\keywords{Levi-flat hypersurfaces, holomorphic foliations, isochoric coordinates}
\begin{document}

\begin{abstract}
We study normal forms of germs of singular real-analytic Levi-flat hypersurfaces. We prove the existence of rigid normal forms for singular Levi-flat hypersurfaces which are defined by the vanishing of the real part of complex quasihomogeneous polynomials with isolated singularity. This result generalizes previous results of Burns-Gong \cite{burnsgong1999} and Fern\'andez-P\'erez \cite{perez2014}. Furthermore, we prove the existence of two new rigid normal forms for singular real-analytic Levi-flat hypersurfaces which are preserved by a change of \textit{isochore coordinates}, that is, a change of coordinates that preserves volume.  

\end{abstract}

\maketitle
\section{Introduction}
In this paper we study normal forms of germs of singular real-analytic Levi-flat hypersurfaces. Our first result is the following. 
\begin{theoremno}\label{teorema1} 
Let $M=\{F=0\}$ be a germ of an irreducible singular real-analytic Levi-flat hypersurface at $0\in\mathbb{C}^2$ such that
\begin{enumerate}
\item[(a)] $F(z)=\mathcal{R}e(Q(z))+H(z,\bar{z})$;
\item[(b)] $Q$ is a complex quasihomogeneous polynomial of quasihomogeneous degree $d$ with isolated singularity at $0\in\mathbb{C}^{2}$.
\item[(c)] $H$ is a germ of real-analytic function at $0\in\mathbb{C}^2$ of order strictly greater than $d$ and $H(z,\bar{z})=\overline{H(\bar{z},z)}$. 
\end{enumerate}
Then there exists a germ of biholomorphism $\phi:(\mathbb{C}^{2},0)\rightarrow(\mathbb{C}^{2},0)$ such that $$\phi(M)=\left\{\mathcal{R}e\left(Q(z)+\sum^{s}_{j=1} c_{j}e_{j}(z)\right)=0\right\},$$ $e_{1},\ldots,e_{s}$ are the elements of the monomial basis of the local algebra of $Q$ of quasihomogeneous degree strictly greater than $d$
and $c_{j}\in\mathbb{C}$.
\end{theoremno}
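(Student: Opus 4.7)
The plan is to reduce the theorem to a normal form statement for holomorphic germs by replacing the real-analytic equation $F=0$ by a pluriharmonic equation. The argument splits naturally into two stages.

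\textbf{First stage.} I would first show that there exists a holomorphic germ $G:(\mathbb{C}^{2},0)\to(\mathbb{C},0)$ of the form $G=Q+R$, with $R$ holomorphic and of quasihomogeneous order strictly greater than $d$, such that $M=\{\mathcal{R}e(G)=0\}$. The argument uses the Levi foliation $\mathcal{L}$ on the smooth part of $M$: since $Q$ has an isolated singularity at $0$ and $H$ has order strictly greater than $d$, the tangent planes of $\mathcal{L}$ fit together to form a codimension-one holomorphic foliation $\mathcal{F}$ on a punctured neighborhood of $0$ whose leading part is $dQ=0$. By the classical Hartogs/Levi extension theorem for holomorphic codimension-one foliations in dimension two, $\mathcal{F}$ extends across $0$. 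Because $Q$ is a quasihomogeneous first integral of $dQ=0$ with isolated singularity, standard results on integrable perturbations of quasihomogeneous singularities (the Mattei--Moussu/Saito machinery) produce a holomorphic first integral $G=Q+\text{h.o.t.}$ for $\mathcal{F}$. Since the leaves of $\mathcal{F}$ are contained in $M$ and $M$ is irreducible, a real-analytic continuation argument identifies $M$ with the zero set of $\mathcal{R}e(G)$. This step follows the general line of argument used in Burns--Gong \cite{burnsgong1999} and Fern\'andez-P\'erez \cite{perez2014}, adapted to the quasihomogeneous setting.

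\textbf{Second stage.} I would apply the normal form theorem of Arnold--Mather--Saito for semi-quasihomogeneous holomorphic functions. Since $Q$ is quasihomogeneous of degree $d$ with isolated singularity, the local algebra $\mathcal{A}_{Q}=\mathcal{O}_{2}/(\partial Q/\partial z_{1},\partial Q/\partial z_{2})$ is a finite-dimensional graded $\mathbb{C}$-vector space, and for any germ $G=Q+R$ of the type produced in the first stage there exist a germ of biholomorphism $\phi:(\mathbb{C}^{2},0)\to(\mathbb{C}^{2},0)$ and constants $c_{1},\ldots,c_{s}\in\mathbb{C}$ such that
\[
G\circ\phi^{-1}=Q+\sum_{j=1}^{s}c_{j}e_{j},
\]
where $e_{1},\ldots,e_{s}$ are the monomial basis elements of $\mathcal{A}_{Q}$ of quasihomogeneous degree strictly greater than $d$. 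Pushing $M$ forward by $\phi$ then yields $\phi(M)=\{\mathcal{R}e\bigl(Q+\sum_{j}c_{j}e_{j}\bigr)=0\}$, which is the required normal form.

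The main obstacle is the first stage. Producing a holomorphic first integral $G$ of the Levi foliation and proving that $M=\{\mathcal{R}e(G)=0\}$ requires both extending the Levi foliation across the singular point $0$ and integrating the perturbation $\mathcal{F}$ of $dQ=0$; both steps depend essentially on $Q$ being quasihomogeneous with an isolated singularity. Without these hypotheses the extended foliation need not admit a holomorphic first integral, and the passage from $F$ to a pluriharmonic defining function may fail. Once the pluriharmonic representation is available, the second stage is a direct application of well-known finite determinacy results from singularity theory.
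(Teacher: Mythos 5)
Your second stage is essentially the paper's: once one has a holomorphic $f=Q+(\text{higher quasihomogeneous order})$ with $M=\{\mathcal{R}e(f)=0\}$, Arnold's theorem on semiquasihomogeneous functions (Theorem \ref{lemaarnold}) gives the normal form $Q+\sum_j c_je_j$ after a biholomorphism, and pushing $M$ forward finishes the proof. (The paper also spends a moment showing that the degree-$d$ quasihomogeneous part of $f$ is $U(0)Q$, where $\mathcal{R}e(f)=U\cdot F$, before normalizing $U(0)=1$; you should not skip this identification, but it is routine.)

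The genuine gap is in your first stage, and it is exactly the part you flag as ``the main obstacle'' but then dispose of by citation. First, the Levi foliation lives only on the $3$-real-dimensional set $M^{*}$; while it extends to a holomorphic foliation on a neighborhood of each point of $M^{*}$ by the classical smooth theory, getting a single holomorphic foliation on a full punctured neighborhood of $0$ in $\mathbb{C}^{2}$ tangent to $M$ is not a Hartogs-type extension across a point --- it is precisely the content of the Cerveau--Lins Neto theorem (Theorem \ref{cerveaulinsneto1}), and in dimension $n=2$ its hypothesis $(a)$ is unavailable, so one must use hypothesis $(b)$, which \emph{requires as input} that the complexified Levi foliation $\mathcal{L}_{\C}$ on $M_{\C}\subset(\C^{4},0)$ already has a non-constant holomorphic first integral. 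Second, producing that first integral is not a consequence of ``standard Mattei--Moussu/Saito machinery for perturbations of $dQ=0$'': an integrable higher-order perturbation of a quasihomogeneous form need not admit a holomorphic first integral, and in any case the object to be integrated here is $\mathcal{L}_{\C}$ on the $3$-fold $M_{\C}^{*}$, not a foliation of $(\C^{2},0)$. The paper's Proposition \ref{proposition_integral} does the actual work: it performs a weighted blow-up of $M_{\C}$ with weight $(a,b,a,b)$, shows the intersection $\tilde{C}$ of the strict transform with the exceptional weighted projective space is invariant, uses Saito's factorization $Q=\mu x^{m}y^{n}\prod_{\ell}(y^{p}-\lambda_{\ell}x^{q})$ with $m,n\in\{0,1\}$ to enumerate the components of $\sing(\tilde{\mathcal{L}}_{\C})$, computes the linear parts of all generators of the holonomy group of the leaf $S=\tilde{C}\setminus\sing(\tilde{\mathcal{L}}_{\C})$, checks they are roots of unity, and only then applies Lemma \ref{lemaarturo} to obtain the first integral. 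Without this holonomy computation (or an equivalent verification of the dynamical hypotheses), your first stage does not go through, and the theorem is not reduced to finite determinacy.
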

\par When $M$ is a germ of a singular real-analytic Levi-flat hypersurface at $0\in\mathbb{C}^n$, $n\geq 3$, the same result was proved by Fern\'andez-P\'erez in \cite{perez2011}. Therefore, the above theorem completes the study of normal forms of real-analytic Levi-flat hypersurfaces which are defined by the vanishing of real part of complex quasihomogeneous polynomials with isolated singularity. We also note that Theorem \ref{teorema1} generalizes the main results of \cite{burnsgong1999} and \cite{perez2014}, because the authors considered the same theorem for a \textit{generic Morse singularity} and \textit{Arnold singularities} of type $\mathcal{A}_k$, $\mathcal{D}_k$, $E_6$, $E_7$ and $E_8$,  which are given by complex quasihomogeneous polynomials with inner modality zero (see for instance \cite{arnold1974}).  
\par Topics about singular real-analytic Levi-flat hypersurfaces have been previously studied by several authors, see for instance \cite{bedford1977}, \cite{brunella2007}, \cite{perez2015}, \cite{perez2017}, \cite{lebl2013}, and normal forms of CR singular codimension two Levi-flat submanifolds was studied in \cite{gonglebl2015}. On the other hand, the study of normal forms of real-analytic hypersurfaces with Levi-form non-degenerate is given by the theory of Cartan \cite{cartan1932} and Chern-Moser \cite{chern}.
\par The second part of this paper is devoted to prove the existence of normal forms of singular real-analytic Levi-flat hypersurfaces which are preserved by a change of \textit{isochore} coordinates, that is, a change of coordinates that \textit{preserve volume}. Our main motivation are the Morse-type results for singularities of holomorphic functions given by J. Vey \cite{vey1977} and J-P Fran\c{c}oise \cite{francoise1978}. More precisely, Vey proved an isochore version of Lemma of Morse for germs of holomorphic functions at $0\in\C^n$, $n\geq 2$, and Fran\c{c}oise gave a new proof of the same result. A much more general statement was given by Garay \cite{garay2004}. In this same spirit, we propose here an analogous version of Vey's theorem for singular real-analytic Levi-flat hypersurfaces which are defined by the vanishing of the real part of a generic Morse function. We state the following result.
\begin{theoremno}\label{teorema2}
Let $M=\{F=0\}$ be a germ of an irreducible singular real-analytic Levi-flat hypersurface  at $0\in\mathbb{C}^n$, $n\geq 2$, such that
$$
F(z)=\mathcal{R}e (z_1^2+...+z_n^2)+H(z,\bar{z}),
$$ 
where $H(z,\bar{z})=O(|z|^{3})$ and $H(z,\bar{z})=\overline{H(\bar{z},z)}$. Then, there exists a germ of a volume-preserving biholomorphism $\phi:(\mathbb{C}^n,0)\to (\mathbb{C}^n,0)$ and a germ of an automorphism $\psi:(\mathbb{C},0)\to(\mathbb{C},0)$ such that
$$
\phi(M)=\{\mathcal{R}e(\psi(z_1^2+...+z_n^2))=0\}.$$
\end{theoremno}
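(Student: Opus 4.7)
The plan is to combine the non-isochore normal form established in this paper (Theorem 1, for $n=2$), and its higher-dimensional extension from \cite{perez2011} (for $n\geq 3$), with Vey's isochore Morse lemma \cite{vey1977}.

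First I would apply the non-isochore normal form to reduce $M$ to the standard Morse Levi-flat hypersurface $\{\mathcal{R}e(Q)=0\}$, where $Q(z):=z_1^2+\cdots+z_n^2$. Indeed, $Q$ is a homogeneous (hence quasihomogeneous) polynomial of degree $d=2$ with an isolated singularity at the origin, and its Jacobian ideal coincides with the maximal ideal $(z_1,\ldots,z_n)$, so the local algebra $\mathbb{C}\{z\}/\mathrm{Jac}(Q)\cong\mathbb{C}$ contains no monomials of quasihomogeneous degree strictly greater than $d$. Hence Theorem 1 (when $n=2$) and the analogous theorem from \cite{perez2011} (when $n\geq 3$) produce a germ of biholomorphism $\phi_0:(\mathbb{C}^n,0)\to(\mathbb{C}^n,0)$ with $\phi_0(M)=\{\mathcal{R}e(Q)=0\}$.

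Next I would set $f:=Q\circ\phi_0$. This holomorphic germ has an isolated non-degenerate critical point at the origin: its Hessian there equals $D\phi_0(0)^{T}\cdot\mathrm{Hess}(Q)(0)\cdot D\phi_0(0)$, and $D\phi_0(0)$ is invertible. Moreover, $M=\{z:\mathcal{R}e(f(z))=0\}$. Vey's isochore Morse lemma (see also \cite{francoise1978} and \cite{garay2004}), applied to $f$, then supplies a germ of volume-preserving biholomorphism $\phi:(\mathbb{C}^n,0)\to(\mathbb{C}^n,0)$ and a germ of automorphism $\psi:(\mathbb{C},0)\to(\mathbb{C},0)$ with $f=\psi\circ Q\circ\phi$. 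The direct computation
$$\phi(M)=\{w\,:\,\mathcal{R}e(f(\phi^{-1}(w)))=0\}=\{\mathcal{R}e(\psi(Q))=0\}$$
yields exactly the desired normal form.

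The principal obstacle I anticipate lies in the first step: it relies on the Morse singularity having trivial local algebra in positive quasihomogeneous degrees, so that the general normal form result collapses precisely to $\{\mathcal{R}e(Q)=0\}$ with no residual summands $c_j e_j$. Once this reduction is in place, the passage to isochore coordinates is comparatively clean: the holomorphic function $f=Q\circ\phi_0$ absorbs all of the non-volume-preserving distortion of $\phi_0$ into its own critical structure, and Vey's lemma disposes of that distortion at the price of introducing the one-variable automorphism $\psi$. Beyond correctly chaining the two theorems and verifying Vey's hypotheses on $f$, I do not expect further complications.
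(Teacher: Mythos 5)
Your strategy is essentially the paper's: produce a holomorphic germ $f$ with $M=\{\mathcal{R}e(f)=0\}$ and a non-degenerate critical point, then apply Vey's lemma. The difference is in how $f$ is obtained. The paper does not pass through the full normal-form theorem; it invokes the first-integral result of \cite{perez2011} (restated as Theorem \ref{perez1}) to get directly $f(z)=z_1^2+\cdots+z_n^2+O(|z|^3)$, whose Hessian form is exactly $2(z_1^2+\cdots+z_n^2)$, so after Vey only the factor $2$ has to be absorbed into $\psi$. You instead first straighten $M$ by a (generally non-isochore) biholomorphism $\phi_0$ and set $f=Q\circ\phi_0$. That is legitimate, but it makes your second step an overstatement of Vey's lemma as it stands: Vey gives $f\circ\phi_1^{-1}=\psi_1\circ h$ where $h$ is the Hessian form of $f$, and here $h(z)=2\,Q(Lz)$ with $L=D\phi_0(0)$ --- a general non-degenerate quadratic form, not $Q$. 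You cannot simply undo $L$ afterwards, because $\det L\neq 1$ in general and that would destroy the volume-preserving property you just gained.

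The gap is easily closed, and you should make the closing explicit: write $L=c^{1/n}B$ with $c=\det L$ and $\det B=1$; homogeneity of degree $2$ gives $h=2c^{2/n}\,Q\circ B$, so $f\circ\phi_1^{-1}=\psi\circ Q\circ B$ with $\psi(t):=\psi_1\bigl(2c^{2/n}t\bigr)$, and then $\phi:=B\circ\phi_1$ is volume-preserving with $f\circ\phi^{-1}=\psi\circ Q$, whence $\phi(M)=\{\mathcal{R}e(\psi(z_1^2+\cdots+z_n^2))=0\}$. With that normalization inserted, your argument is complete and equivalent to the paper's; your reduction of the first step (trivial local algebra in degrees $>d$, so no residual terms $c_je_j$) is exactly right.
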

\par The above theorem can be viewed as an \textit{isochore version} of Burns-Gong's theorem \cite{burnsgong1999}. On the other hand, in order to establish our next result we consider some definitions and notations that will be explained in the section 2:  for a germ of a singular real-analytic Levi-flat hypersurface $M$ with Levi foliation $\mathcal{L}$ and singular set $\sing(M)$, we will define the complexification $M_{\C}$ of $M$, which will be a germ of complex analytic subvariety contained $M$. The singular set of $M_{\C}$ will be denoted by $\sing(M_{\C})$. We will see that $M_{\C}$ is equipped with a germ of a singular codimension-one holomorphic foliation $\mathcal{L}_{\C}$, which will be the complexification of the Levi foliation $\mathcal{L}$. The singular set of $\mathcal{L}_{\C}$ will be denoted by $\sing(\mathcal{L}_{\C})$. 
\par Recently in  \cite{szawlowski2012}, A. Szawlowski presented a volume-preserving normal form for germs of holomorphic functions that are right-equivalent to the product of all coordinates. Motivated by \cite{szawlowski2012}, we will  prove an analogous version for singular real-analytic Levi-flat hypersurfaces.  
\begin{theoremno}\label{teorema3} 
Let $M=\{F=0\}$ be a germ of an irreducible singular real-analytic Levi-flat hypersurface at $0\in\mathbb{C}^n$, $n\geq 2$, such that
$F(z)= \mathcal{R}e(z_1\cdots z_n) + H(z,\bar{z})$,
where $H(z,\bar{z})=O(|z|^{n+1})$ and $H(z,\bar{z})=\overline{H(\bar{z},z)}$. Suppose that 
 $$\displaystyle\sing(M_{\C})=\bigcup_{\begin{tiny}
\begin{array}{c}
1\leq i<j\leq n\\1\leq k<\ell\leq n\end{array}\end{tiny}}V_{ijk\ell},$$
where $V_{ijk\ell}=\{(z,w)\in\C^{n}\times\C^{n}:z_i=z_j=w_k=w_{\ell}=0\}$ and $\sing(M_{\C})\subset \sing(\mathcal{L}_{\mathbb{C}})$.
Then, there exists a germ of codimension-one holomorphic foliation $\mathcal{F}_M$ tangent to $M$, with a non-constant holomorphic first integral $f(z)=z_1\cdots z_n+O(|z|^{n+1})$  such that $$M=\{\mathcal{R}e(f(z))=0\}.$$
\end{theoremno}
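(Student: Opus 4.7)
The plan is to produce the holomorphic first integral asserted in the statement in three steps: (i) extend the Levi foliation of $M$ to an ambient codimension-one holomorphic foliation $\mathcal{F}_M$ on a neighborhood of $0\in\C^n$; (ii) show that $\mathcal{F}_M$ admits a holomorphic first integral with leading term $z_1\cdots z_n$; (iii) recover $M$ as the zero set of the real part of this first integral.

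For step (i), I would work inside the complexification $M_{\C}\subset(\C^{2n},0)$, which has defining function $\tilde{F}(z,w) = \tfrac{1}{2}(z_1\cdots z_n + w_1\cdots w_n) + \tilde{H}(z,w)$ obtained by polarizing $F$. The hypothesis $\sing(M_{\C})\subset\sing(\mathcal{L}_{\C})$ guarantees that the complexified Levi foliation $\mathcal{L}_{\C}$ has no additional singularities beyond those inside $\bigcup V_{ijk\ell}$, and the specific form of this singular set is exactly what allows the projection $\pi\colon(z,w)\mapsto z$ to push $\mathcal{L}_{\C}$ forward to a germ of codimension-one holomorphic foliation $\mathcal{F}_M$ on $(\C^n,0)$ tangent to $M$; this is the extension mechanism developed in \cite{perez2011, perez2015}. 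From the leading term of $F$ one reads off that $\mathcal{F}_M$ is locally defined by an integrable $1$-form $\omega = d(z_1\cdots z_n) + \omega_1$, with $\omega_1$ of order at least $n$, and that $\sing(\mathcal{F}_M) = \bigcup_{1\leq i<j\leq n}\{z_i=z_j=0\}$.

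The main obstacle is step (ii): showing that $\mathcal{F}_M$ admits a holomorphic first integral $f(z) = z_1\cdots z_n + O(|z|^{n+1})$. I would analyze the foliation stratum by stratum. At a generic point of each component $\{z_i=z_j=0\}$ of $\sing(\mathcal{F}_M)$, a two-dimensional transverse section meets $\mathcal{F}_M$ in a perturbation of the foliation $d(z_iz_j)=0$; by a Mattei--Moussu type argument, such a perturbation admits a holomorphic first integral precisely when its holonomy group is trivial. The required triviality is extracted from the real-analytic structure of $M$: the presence of the global defining function $F$ together with the reality condition $H(z,\bar z)=\overline{H(\bar z,z)}$ forces the leaves of $\mathcal{L}$ to be closed in $M\setminus\sing(M)$, which in turn forces the holonomy of $\mathcal{F}_M$ along each stratum to be trivial. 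The local first integrals then glue into a single holomorphic function $f$ on $(\C^n,0)$ with the prescribed leading order.

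Finally, for step (iii), both $F$ and $\mathcal{R}e(f)$ are germs of real-analytic functions of order $n$ at the origin with identical $n$-th order homogeneous part $\mathcal{R}e(z_1\cdots z_n)$, and both vanish on every Levi leaf of $M$. Since $M$ is irreducible, a standard argument on real-analytic defining functions of irreducible Levi-flat hypersurfaces, of the type used at the end of the proofs of Theorems~\ref{teorema1} and~\ref{teorema2}, shows that $F$ and $\mathcal{R}e(f)$ differ by a positive real-analytic unit, so they define the same germ of hypersurface at the origin, completing the proof.
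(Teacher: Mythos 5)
Your overall target is right, but the logical order of your steps (i) and (ii) is inverted, and this is a genuine gap rather than a presentational difference. In this situation the singular set $\sing(\eta_\C\vert_{M^*_\C})$ has codimension exactly $2$ in $M^*_\C$ (it is, to leading order, the union of the sets $\{z_i=z_j=0\}\cap M_\C$), so the only available extension mechanism is part (b) of the Cerveau--Lins Neto theorem (Theorem \ref{cerveaulinsneto1}), whose hypothesis is precisely that $\mathcal{L}_\C$ already possesses a non-constant holomorphic first integral. There is no independent ``pushforward under $(z,w)\mapsto z$'' construction of $\mathcal{F}_M$: producing an ambient codimension-one holomorphic foliation tangent to $M$ is exactly the hard content of that theorem, and the references you cite also obtain it this way. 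Consequently your step (i) presupposes what your step (ii) is supposed to prove, and your holonomy analysis is carried out on the wrong object: it must be done for $\mathcal{L}_\C$ on $M_\C\subset(\C^{2n},0)$ \emph{before} any ambient foliation exists. This is what the paper does, via a blow-up at the origin followed by a blow-up along a component of $\sing(\tilde M_\C)$, computing the holonomy of the leaf $S=(\tilde{\tilde M}_\C\cap D)\setminus\sing(\tilde{\tilde{\mathcal{L}}}_\C)$ and finding linear parts $e^{-2\pi i/(n+2)}$, so that Lemma \ref{lemaarturo} applies.

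Two further points in step (ii) are incorrect as stated. First, the Mattei--Moussu criterion is that a first integral exists when the holonomy group is \emph{finite} and the leaves are closed, not when it is trivial; the model $d(z_1\cdots z_n)=0$ itself has nontrivial (periodic) holonomy, and the paper's computed holonomy generators have derivative $e^{-2\pi i/(n+2)}\neq 1$ at the fixed point. Second, closedness of the leaves does not by itself force the holonomy to be trivial or even finite: one must separately verify that the linear parts of the holonomy generators are roots of unity (hypothesis (b) of Lemma \ref{lemaarturo}); only then does closedness upgrade periodicity of the linear part to periodicity of the map. This verification is the actual computation, and your proposal omits it. Your step (iii) is essentially the paper's argument (irreducibility of $\mathcal{R}e(f)$ via Lemma \ref{lema1} and comparison of lowest-order terms in $\mathcal{R}e(f)=U\cdot F$) and is fine; note also that the paper treats $n=2$ separately by reducing to Theorem \ref{teorema2} through $z_1z_2=x^2+y^2$, since there the singular strata degenerate to the origin.
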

As consequence of above theorem and the main result of Szawlowski \cite{szawlowski2012} we have the following corollary.
\begin{corollary}\label{coro1}
Let $M$ be a germ of an irreducible singular real-analytic Levi-flat hypersurface as in Theorem \ref{teorema3}. If $f$ is right equivalent to the product of all coordinates, $f\sim_{R}z_1\cdots z_n$. Then there exists a germ of a volume-preserving biholomorphism $\Phi:(\mathbb{C}^n,0)\to (\mathbb{C}^n,0)$ and a germ of an automorphism $\Psi:(\mathbb{C},0)\to(\mathbb{C},0)$ such that
$$
\Phi(M)=\{\mathcal{R}e(\Psi(z_1\cdots z_n))=0\},$$
where $\Psi$ is uniquely determined by $f$ up to a sign. 
\end{corollary}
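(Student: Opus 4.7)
The plan is to combine Theorem \ref{teorema3}, which extracts a holomorphic first integral from the real-analytic data of $M$, with Szawlowski's volume-preserving normal form theorem \cite{szawlowski2012}, which acts directly on such holomorphic germs. The role of Theorem \ref{teorema3} here is essentially as a bridge: it trades the real-analytic defining function $F$ of $M$ for a genuine holomorphic germ $f$ whose real part level set recovers $M$, after which the question ceases to be Levi-flat in nature and becomes purely a right-equivalence question for a holomorphic function.

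First, I would invoke Theorem \ref{teorema3} to produce a non-constant holomorphic first integral
$$
f(z)=z_1\cdots z_n + O(|z|^{n+1})
$$
of a codimension-one holomorphic foliation tangent to $M$, satisfying $M=\{\mathcal{R}e(f(z))=0\}$. Under the corollary's hypothesis $f\sim_R z_1\cdots z_n$, I would then apply Szawlowski's theorem to $f$, obtaining a germ of a volume-preserving biholomorphism $\Phi:(\mathbb{C}^n,0)\to(\mathbb{C}^n,0)$ and a germ of an automorphism $\Psi:(\mathbb{C},0)\to(\mathbb{C},0)$, with $\Psi$ uniquely determined by $f$ up to a sign, such that
$$
f\circ \Phi^{-1}(z)=\Psi(z_1\cdots z_n).
$$

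Pushing $M$ forward by $\Phi$ and substituting gives $\Phi(M)=\{\mathcal{R}e(\Psi(z_1\cdots z_n))=0\}$, and the sign-uniqueness of $\Psi$ is inherited directly from Szawlowski's statement. The proof of the corollary is thus essentially mechanical once Theorem \ref{teorema3} is in hand.

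There is no serious obstacle in this argument itself; the entire difficulty is absorbed into Theorem \ref{teorema3}, whose output supplies a first integral whose initial $n$-jet is exactly $z_1\cdots z_n$, precisely the form required to trigger Szawlowski's normal form via the right-equivalence assumption. The only point worth being explicit about is that $\Phi$ being volume-preserving is preserved under taking inverses, so that pushing forward (rather than pulling back) by $\Phi$ does not introduce any loss in the isochore nature of the change of coordinates.
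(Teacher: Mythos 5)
Your proposal is correct and follows exactly the paper's own argument: apply Theorem \ref{teorema3} to obtain the first integral $f$ with $M=\{\mathcal{R}e(f)=0\}$, then apply Szawlowski's theorem to $f$ and push $M$ forward by the resulting volume-preserving biholomorphism, with the sign-uniqueness of $\Psi$ inherited from that theorem. Your remark that volume-preservation is stable under inversion is a harmless extra precision; nothing differs in substance from the paper's proof.
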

\par Let us recall that two germs of holomorphic functions $f$ and $g$ are right equivalent $f\sim_R g$, if there exist a germ of biholomorphism $\phi$ around the origin such that $f\circ \phi^{-1} = g$. We remark that the normal forms of Theorem \ref{teorema3} and Corollary \ref{coro1} are germs of real-analytic Levi-flat hypersurfaces whose singular set are of positive dimension. In general, the problem of finding normal forms of germs of real-analytic Levi-flat hypersurfaces with non-isolated singularities is very difficult and there are few results about it, see for instance \cite{perez2015}. 
\par To prove theorems 1, 2 and 3 we use the techniques of holomorphic foliations developed by D. Cerveau and A. Lins Neto in \cite{cerveauneto2011} and the first author in \cite{tesis}. These are fundamental in order to find normal forms of Levi-flat hypersurfaces. Specifically, we apply a result of Cerveau-Lins Neto that gives sufficient conditions for a real-analytic Levi-flat hypersurface to be defined by the zeros of the real part of a holomorphic function and a key Lemma that will be stated in section 5. 
\par This paper is organized as follows: in section 2, we recall some properties and known results about singular Levi-flat hypersurfaces. In section 3, we state some results about normal forms for a complex quasihomogeneous polynomial. Section $4$ is devoted to recall
the notions of weighted projective space and weighted blow-ups. In section $5$
we prove Theorem \ref{teorema1} and give an application of Theorem \ref{teorema1}. The section $6$ is dedicated to establish the isochore normal forms for holomorphic functions given by Vey and Szawlowski. In section $7$, we proved Theorem \ref{teorema2} and finally in section $8$, we proved Theorem \ref{teorema3} and Corollary \ref{coro1}.



\section{Singular Levi-flat hypersurfaces and holomorphic foliations}

The following notation will be used in this paper:
\begin{enumerate}
\item $\mathcal{O}_n$: the ring of germs of holomorphic functions at $0\in\C^n$.
\item $\mathcal{O}^*_n = \{ f\in\mathcal{O}_n\vert f(0)\neq 0\}$.
\item $\mathcal{M}_n=\{ f\in\mathcal{O}_n \vert f(0)=0\}$, the maximal ideal of $\mathcal{O}_n$.
\item $\mathcal{A}_n$: the ring of germs at $0\in\C^n$ of \textit{complex} valued real-analytic functions.
\item $\mathcal{A}_{n\mathbb{R}}$: the ring o germs of \textit{real} valued  functions. Note that $f\in\mathcal{A}_n\cap \mathcal{A}_{n\mathbb{R}}\iff f=\overline{f}$.
\item $j^k_0(f)$ is the $k$-jet at $0\in\C^n$ of $f\in\mathcal{O}_n$.
\item $\dif(\C^n,0)$: the group of germs of biholomorphisms $f:(\C^n,0)\rightarrow (\C^n,0)$ at $0\in\C^n$ with the operation of composition.

\end{enumerate}
\par Let $X$ be a compact connected complex manifold of complex dimension $n\geq 2$. A codimension-one singular \textit{holomorphic foliation} $\mathcal{F}$ on $X$ is given by a covering of $X$ by open subsets $\{U_j\}_{j\in J}$ and a collection of integrable holomorphic 1-forms $\omega_j$ on $U_j$, $\omega_j\wedge d\omega_j=0$, having zero set of complex codimension at least two such that, on each non-empty intersection $U_j\cap U_k$, we have 
\begin{equation}\label{colagem}
\omega_j=g_{jk}\omega_k,\,\,\,\,\,\,\,\text{with}\,\,\,\,\,\,\,\,g_{jk}\in\mathcal{O}^{*}(U_j\cap U_k).
\end{equation}
Let $\sing(\omega_j)=\{p\in U_j:\omega_j(p)=0\}$. Condition (\ref{colagem}) implies that $\sing(\mathcal{F}):=\displaystyle\bigcup_{j\in J}\sing(\omega_j)$ is a complex subvariety of complex codimension at least two in $X$.
\par Let $M$ be a germ of a real codimension-one irreducible real-analytic subvariety at $0\in\C^n$, $n\geq 2$. Without loss of generality we may assume that $M=\{F(z)=0\}$, where $F$ is  a germ of irreducible real-analytic function at $0\in\C^n$. We define the \textit{singular set} of $M$ as $$\sing(M)=\{ F(z)=0\}\cap\{dF(z)=0\}$$ and its \textit{regular part} is defined as $M^*=M\setminus \sing(M)$. Consider the distribution of complex hyperplanes $L$ on $M^*$ given by
$$
L_p:=ker(\partial F(p))\subset T_pM^*=ker(dF(p)),\text{ }p\in M^*.
$$
This distribution is called \textit{Levi distribution}. When $L$ is integrable, in the sense of Frobenius, then we say that $M$ is \textit{Levi-flat}. Since $M^*$ admits an integrable complex distribution, it is foliated locally by a real-analytic codimension-one foliation $\mathcal{L}$ on $M^{*}$, the \textit{Levi foliation}. Each leaf of $\mathcal{L}$ is a codimension-one holomorphic submanifold immersed in $M^*$.
\par The distribution $L$ can be defined by the real-analytic 1-form $\eta = i(\partial F-\bar{\partial} F)$, the \textit{Levi form} of $F$. The integrability condition is equivalent to
$$
(\partial F-\bpartial F)\wedge \partial\bpartial F\vert_{M^*}=0
$$
which using the fact that $\partial F +\bpartial F = dF$, is equivalent to
$$
\partial F (p)\wedge \bpartial F(p) \wedge \partial \bpartial F (p) = 0\,\,\,\,\,\,\,\,\,\,\forall\,\, p\in M^{*}.
$$
We refer to the book \cite{salah1999} for the basic language and background about Levi-flat hypersurfaces. 
\par Suppose that $M$ is Levi-flat as above. If $\sing(M)=\emptyset$, then we say that $M$ is \textit{smooth}. In this case, according to Cartan \cite{cartan1932}, around the origin of $\mathbb{C}^n$ one may find suitable coordinates $(z_1,...,z_n)$ of $\C^n$ such that the germ of $M$ at $0\in\mathbb{C}^n$ is given by $$\{\mathcal{R}e(z_n)=0\}.$$ This is called the \textit{local normal form} for a smooth real-analytic Levi-flat hypersurface $M$ at $0\in\mathbb{C}^n$. 
\par In order to build singular real-analytic Levi-flat hypersurfaces which are irreducible, we consider the following lemma from \cite{cerveauneto2011}.
\begin{lemma}\label{lema1} Let $f\in\mathcal{O}_n$, $f\neq 0$, $f(0)=0$ which is not a power in $\mathcal{O}_n$. Then $\mathcal{I}m(f)$ and $\mathcal{R}e(f)$ are irreducible in $\mathcal{A}_{n\mathbb{R}}$. 
\end{lemma}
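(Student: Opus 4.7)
The plan is a complexification argument. Since $\mathcal{I}m(f)=-\mathcal{R}e(if)$ and $if$ is a power iff $f$ is, it suffices to treat $\mathcal{R}e(f)$. Every real-analytic germ $g(z,\bar z)\in\mathcal{A}_n$ has a canonical complexification $\hat g(z,w)\in\mathcal{O}_{2n}$ obtained by replacing $\bar z$ by an independent variable $w$; this gives an injective ring homomorphism $\mathcal{A}_n\hookrightarrow\mathcal{O}_{2n}$ that preserves units. Under it, $\mathcal{R}e(f)$ corresponds to $F(z,w):=\tfrac{1}{2}\bigl(f(z)+\bar f(w)\bigr)$, where $\bar f$ is the holomorphic function obtained by conjugating the Taylor coefficients of $f$. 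A non-trivial factorization of $\mathcal{R}e(f)$ in $\mathcal{A}_{n\mathbb{R}}$ lifts to a non-trivial factorization of $F$ in $\mathcal{O}_{2n}$, so the lemma reduces to showing that $F$ is irreducible in $\mathcal{O}_{2n}$ whenever $f$ is not a power in $\mathcal{O}_n$.

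I argue this by contradiction. Write $F=F_1^{k_1}\cdots F_r^{k_r}$ with distinct irreducibles $F_i\in\mathcal{O}_{2n}$ and $\sum k_i\ge 2$. If $r=1$, then $F=F_1^{k_1}$ with $k_1\ge 2$; restricting to $w=0$ gives $f(z)=F_1(z,0)^{k_1}$ with $F_1(z,0)$ a non-unit in $\mathcal{O}_n$ (because $F_1(0,0)=0$), so $f$ is a $k_1$-th power, contradicting the hypothesis. Hence $r\ge 2$, and in a small polydisc the analytic set $\{F=0\}=V(F_1)\cup\cdots\cup V(F_r)$ is a union of at least two distinct irreducible hypersurfaces of dimension $2n-1$.

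Consider now the holomorphic map $\Psi\colon \{F=0\}\to\C$, $\Psi(z,w):=f(z)$, which coincides with $-\bar f(w)$ on $\{F=0\}$. For small $t$, $\Psi^{-1}(t)=f^{-1}(t)\times\bar f^{-1}(-t)$. The essential input is that, since $f$ (hence $\bar f$) is not a power, the generic fibers $f^{-1}(t)$ are irreducible (smooth and connected) in a suitable polydisc around $0$; this is a classical consequence of the Stein factorization of $f\colon(\C^n,0)\to(\C,0)$, since a generic fiber splitting into $k\ge 2$ connected pieces would yield a holomorphic $h$ with $f=h^k$. Consequently $\Psi^{-1}(t)$ is irreducible of dimension $2n-2$ for $t$ in a dense open subset $T\subset\C$ near $0$. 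Because $\Psi$ cannot be constant on any $V(F_i)$ (the dimensions forbid it), for generic $t$ the set $V(F_i)\cap\Psi^{-1}(t)$ is a non-empty analytic subset of the irreducible $\Psi^{-1}(t)$ of full dimension $2n-2$, hence equals $\Psi^{-1}(t)$. Thus $V(F_i)\supset\Psi^{-1}(T)$ for every $i$; since $\Psi^{-1}(T)$ is dense in $\{F=0\}$ (by the open-mapping theorem applied to $\Psi$ on the smooth locus of each component) and each $V(F_i)$ is closed, $V(F_i)=\{F=0\}$, contradicting $r\ge 2$.

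The main obstacle is the Stein-factorization fact invoked above: proving that a germ of a non-power holomorphic function has irreducible generic fibers in a suitable representative. Once this classical singularity-theoretic input is available, the rest is a clean dimension count combined with the observation that a hypersurface irreducible as a set can only fail to be irreducible as a function by being a non-trivial power, which is ruled out by the $w=0$ restriction.
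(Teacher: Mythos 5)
The paper does not actually prove this lemma: it is quoted verbatim from Cerveau--Lins Neto \cite{cerveauneto2011}, so there is no in-paper argument to compare against. Your proof is, as far as I can check, correct and self-contained, and it follows the natural (and, in spirit, the original) route: reduce $\mathcal{I}m(f)$ to $\mathcal{R}e(f)$, complexify so that irreducibility of $\mathcal{R}e(f)$ in $\mathcal{A}_{n\mathbb{R}}$ follows from irreducibility of $F(z,w)=\tfrac12\bigl(f(z)+\bar f(w)\bigr)$ in $\mathcal{O}_{2n}$, kill the case $F=F_1^{k}$ by restricting to $w=0$ (this is exactly where ``$f$ is not a power'' enters), and kill the case of two distinct components by sweeping $\{F=0\}$ with the irreducible fibers $f^{-1}(t)\times\bar f^{-1}(-t)$. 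You correctly identify the crux as the classical fact that a non-power germ has connected (hence, being smooth, irreducible) generic Milnor fibers; the Stein-factorization justification you sketch is the standard one and does close the gap, since a finite map germ $(\C,0)\to(\C,0)$ of degree $k$ forces $f=(\text{unit})\cdot g^k$, i.e.\ a $k$-th power. Two small points of hygiene rather than substance: to say the fibers $f^{-1}(t)$ are smooth for \emph{all} small $t\neq 0$ you should note that $f$ is constant, hence zero, on each component of its critical set (Sard plus the open mapping theorem on any component where $f|_{\Sigma}$ is non-constant); and the density of $\Psi^{-1}(T)$ in $\{F=0\}$ is cleaner stated as: $\{F=0\}\cap\{f(z)=0\}=f^{-1}(0)\times\bar f^{-1}(0)$ has dimension $2n-2$, hence is a nowhere dense analytic subset of each $(2n-1)$-dimensional component $V(F_i)$. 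With those phrasings tightened, the argument stands.
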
 

 Before proving our results, let us describe some known results and examples. 
 \begin{example} 
Let $f\in\mathcal{O}_n$ be a germ of non-constant holomorphic function with $f(0)=0$. Then the  set $M=\{\mathcal{R}e(f)=0\}$ is Levi-flat and its singular set is given by $\crit(f)\cap M$, where $\crit(f)$ is the set of critical points of $f$. The leaves of the Levi foliation $\mathcal{L}$ on $M$ are the imaginary levels of $f$.
\end{example}
\begin{example}
In $\C^n$, $n\geq 2$, let $M$ be given as the set of zeros of
$$
F(z_1,z_2,...,z_n)=z_1\overline{z}_1-z_2\overline{z}_2.
$$
Then $M$ is Levi-flat and its singular set biholomorphic to $\C^{n-2}$. This real-analytic hypersurface is called \emph{quadratic complex cone}. The leaves of the Levi foliation $\mathcal{L}$ on $M$ are the hyperplanes 
$$L_c=\{(z_1,z_2,...,z_n)\in\mathbb{C}^n:z_1-c\cdot z_2=0\}\,\,\,\,\,\text{where}\,\,\,\,\,\,c\in\mathbb{R}.$$
\end{example}
\begin{example}
Let $M$ be a germ of real-analytic hypersurface at $0\in\C^n$ given by $\{ F=0\}$, where
$$
F(z_1,...,z_n) = \mathcal{R}e(z_1^2+...+z_n^2)+H(z,\bar{z}),\,\,\,\,\,\text{and}\,\,\,\,\,H(z,\bar{z})=O(|z|^3).
$$
If $M$ is Levi-flat, then, according to \cite{burnsgong1999}, there exists a holomorphic coordinate system such that $M=\{\mathcal{R}e(x_1^2+...+x_n^2)=0\}$. We remark that this result was generalized in \cite{perez2011}, where the first author considered the real part of a complex homogeneous polynomial of degree $k\geq 2$ with an isolated singularity. 
\end{example}

\begin{example}
We considere the famous $A_{k},D_{k},E_{k}$ singularities or simple singularities of Arnold  \cite{arnold1972}, \cite{arnold1974}:
$$\begin{tabular}{|c|l|r|}\hline
Type & Normal form & Conditions \\
\hline
$A_{k}$ & $z_{1}^{2}+z_{2}^{k+1}+\ldots +z^{2}_{n},$ & $k\geq 1$\\
$D_{k}$ & $z_{1}^{2}z_{2}+z_{2}^{k-1}+z_{3}^{2}+\ldots+z_{n}^{2},$ & $k\geq 4$\\
$E_{6}$ & $z_{1}^{4}+z_{2}^{3}+z_{3}^{2}+\ldots+z_{n}^{2}$ &  \\
$E_{7}$ & $z_{1}^{3}z_{2}+z_{2}^{3}+z_{3}^{2}+\ldots+z_{n}^{2}$ &  \\
$E_{8}$ & $z_{1}^{5}+z_{2}^{3}+z_{3}^{2}+\ldots+z_{n}^{2}$ &  \\
\hline

\end{tabular}$$
Let $M$ be a germ of singular real-analytic Levi-flat hypersurface at $0\in\mathbb{C}^2$ defined by $\{F=0\}$, where 
$$F(z)=\mathcal{R}e(Q(z))+H(z,\bar{z}),$$
where $Q$ is a complex quasihomogeneous polynomial of $A_{k}$, $D_{k},$ or $E_{k}$ type of quasihomogeneous degree $d$. Then in \cite{perez2014} it has been proved that there exists a holomorphic coordinate system such that 
$$M=\{\mathcal{R}e(Q(z))=0\}.$$ 
We remark that, in this case, the elements $e_{1},\ldots,e_{s}$ of the monomial basis of the local algebra of $Q$ of quasihomogeneous degree strictly greater than $d$ are zero, because the inner modality of the $A_{k},D_{k},E_{k}$ singularities are zero. 
 
\end{example}

\subsection{Complexification of singular Levi-flat hypersurfaces}\label{complexification}

Let $M$ be a germ of a singular real-analytic Levi-flat hypersurface at $0\in\mathbb{C}^n$ defined by the set of zeros of $F\in\mathcal{A}_{n\mathbb{R}}$. Let $\sing(M)$, $M^*$ and $\mathcal{L}$ be the singular set, the regular part and the Levi foliation on $M^{*}$ respectively. 
\par We write the Taylor series of $F$ around $0\in\C^n$ as
$$
F(z) = \sum_{\mu,\nu} F_{\mu\nu}z^\mu\bar{z}^\nu,
$$
where $F_{\mu\nu}\in\mathbb{C}$, $\mu=(\mu_1,...,\mu_n)$, $\nu=(\nu_1,...,\nu_n)$, $z^\mu = z_1^{\mu_1}\cdots z_n^{\mu_n}$ and $\bar{z}^\nu = \bar{z}_1^{\nu_1}\cdots \bar{z}_n^{\nu_n}$. Since $F\in\mathcal{A}_{n\mathbb{R}}$, the coefficients verify $\overline{F}_{\mu \nu} = F_{\nu \mu}$. We define the \emph{complexification} $F_\C\in\mathcal{O}_{2n}$ of $F$ as the function defined by the power series $$F_\C(z,w) = \sum_{\mu,\nu} F_{\mu\nu}z^{\mu}w^{\nu}.$$
If the power series for $F$ converges in a polydisc $D^n_r = \{ z\in\C^n : \vert z_j\vert \leq r\}$ then the power series of the complexification $F_\C$ of $F$ is convergent in the polydisc $D^{2n}_r$ and therefore is holomorphic at $0\in\C^{2n}$. Moreover, $$F(z)=F_\C(z,\bar{z})\,\,\,\,\,\,\,\forall z\in D^n_r.$$ This complexification does not depend on choice of coordinate system, see for instance  \cite{cerveauneto2011}.
\par As seen before, the Levi 1-form is given by $\eta=i(\partial F - \bar{\partial} F)$. Its \emph{complexification} is the germ of holomorphic 1-form
$$
\eta_\C = i\sum_{j=1}^n \left( \dfrac{\partial F_\C}{\partial z_j}dz_j - \dfrac{\partial F_\C}{\partial w_j}dw_j \right) =i\sum_{\mu,\nu} (F_{\mu\nu}w^\nu d(z^\mu) - F_{\mu\nu}z^{\mu}d(w^\nu)).
$$
The complexification of $M$ is defined as $M_\C = \{F_\C=0\}$. As before, $M_\C$ does not depend on choice of coordinate system. The \emph{regular part} of $M_\C$ is 
$$M^*_\C = M_\C\setminus \{dF_\C = 0\}$$ and the singular part of $M_\C$ is 
$$\sing(M_{\C})=M_\C\cap \{dF_\C = 0\}.$$ Since $\eta$ is integrable on $M^{*}$, then also $\eta_\C\vert _{M^*_{\C}}$ is integrable and defines a codimension-one holomorphic foliation on $M^*_\C$, which will be denoted by $\mathcal{L}_\C$. Such foliation is called \emph{complexification} of $\mathcal{L}$.
\begin{remark}\label{remark_fol}
We can write $\eta_\C = i(\alpha - \beta)$, where 
$$\alpha:=\sum^{n}_{j=1} \dfrac{\partial F_\C}{\partial z_j}dz_j\,\,\,\,\,\text{and}\,\,\,\,\,\,\,\beta:=\sum^{n}_{j=1}\dfrac{\partial F_\C}{\partial w_j}dw_j.$$
Note that $dF_\C=\alpha+\beta$, then 
$$
\eta_\C\vert_{M^*_\C}=(\eta_\C+idF_\C)\vert_{M^*_\C}=2i\alpha\vert_{M^*_\C}.
$$
Analogously $$
\eta_\C\vert_{M^*_\C}=(\eta_\C-idF_\C)\vert_{M^*_\C}=-2i\beta\vert_{M^*_\C}.$$
In particular, $\alpha\vert_{M^*_\C}$ and $\beta\vert_{M^*_\C}$ define $\mathcal{L}_{\C}$ on $M^*_\C$ and $\sing(\mathcal{L}_{\C})= \sing(\eta_\C\vert_{M^*_\C})$.
\end{remark}
\begin{definition}
Let $M=\{F=0\}$ be a germ at $0\in\C^n$ of a real-analytic Levi-flat hypersurface and $M_\C$ its complexification. We define the \emph{algebraic dimension} of $\sing(M)$ as the complex dimension of $\sing(M_\C)$.
\end{definition}

Let $W=M^*_\C\setminus \sing(\eta_\C\vert_{M^*_\C})$ and let $L_p$ be the leaf of $\mathcal{L}_\C$ through $p\in W$. We have the following lemma from \cite{cerveauneto2011}.

\begin{lemma}\label{lema2} For any $p\in W$, the leaf $L_p$ is closed (with the induced topology) in $M^*_\C$.
\end{lemma}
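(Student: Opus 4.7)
The argument rests on the decoupled structure of $\eta_\C|_{M^*_\C}$: by Remark \ref{remark_fol}, on $M^*_\C$ the foliation $\mathcal{L}_\C$ is equally defined by the 1-form $\alpha=\sum_j \partial_{z_j} F_\C\,dz_j$ (involving only the $z$-differentials) or by $\beta=\sum_j \partial_{w_j} F_\C\,dw_j$ (only $w$-differentials), since $dF_\C=\alpha+\beta$ vanishes on $TM_\C$. It follows that at each $p\in W$ both $\alpha(p)$ and $\beta(p)$ are nonzero forms on $T_p\C^{2n}$, so the projections $\pi_z,\pi_w:M^*_\C\to\C^n$, $(z,w)\mapsto z$ and $(z,w)\mapsto w$, are submersions near $p$ with smooth fibers of complex dimension $n-1$.

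The first step is to show that every fiber of $\pi_w|_{M_\C}$, and similarly of $\pi_z|_{M_\C}$, is tangent to $\mathcal{L}_\C$ and hence contained in a single leaf. Indeed, a tangent vector to the fiber $\pi_w^{-1}(w_0)\cap M_\C$ has the form $(v_z,0)$ with $dF_\C(v_z,0)=0$; since $\beta(v_z,0)=0$ trivially, we obtain $\alpha(v_z,0)=0$, placing $(v_z,0)$ in $\ker\alpha=T\mathcal{L}_\C$. Each such fiber is a closed analytic subset of $M_\C$. At any $p\in W$, the two $(n-1)$-dimensional tangent spaces of these fibers intersect trivially and span $T_pL_p$ (of complex dimension $2n-2$); hence by Frobenius, $L_p$ coincides with the smallest subset of $M^*_\C$ containing $p$ that is simultaneously a union of $\pi_w$-fibers and a union of $\pi_z$-fibers of $M_\C$.

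For the closedness, let $q\in\overline{L_p}\cap M^*_\C$ and suppose first that $q\in W$. Choose a contractible neighborhood $U\subset W$ of $q$ on which $\mathcal{L}_\C$ admits a holomorphic first integral $f$, so that local plaques are connected components of level sets $\{f=c\}$. Because $df$ vanishes on vectors tangent to $\mathcal{L}_\C$, the function $f$ is constant on each $\pi_w$- and $\pi_z$-fiber meeting $U$. Combined with the fiber-saturation description of $L_p$ from the previous step, this forces $L_p\cap U\subset\{f=c^*\}$ for a single value $c^*\in\C$; by continuity $f(q)=c^*$, and a connectedness argument places $q$ in the plaque accumulated by $L_p$, so $q\in L_p$. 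The remaining case $q\in\sing(\eta_\C|_{M^*_\C})$ follows by extending the local first integral across this analytic subset (of complex codimension at least two in $M^*_\C$) via Riemann's extension theorem, and applying the preceding argument.

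The main obstacle is the saturation step in the third paragraph: a priori a global leaf may intersect a small open set in countably many plaques carrying distinct first-integral values. Ruling this out here relies crucially on the fiber-chain accessibility from the first step, along which $f$ is forced to be constant, together with the analyticity of the $\pi_w$- and $\pi_z$-fibers, which prevents accumulation of distinct plaque values.
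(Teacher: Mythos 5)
First, note that the paper itself offers no proof of this lemma: it is imported verbatim from Cerveau--Lins Neto \cite{cerveauneto2011}, so the only meaningful comparison is with that source. Your first two paragraphs do capture the structural input of their argument --- the two families of fibers $\{(z,w_0):F_\C(z,w_0)=0\}$ and $\{(z_0,w):F_\C(z_0,w)=0\}$ are tangent to $\mathcal{L}_\C$ because $\alpha$ involves only the $dz_j$ and $\beta$ only the $dw_j$, and the leaf is the fiber-chain saturation of $p$. That part is correct.

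The gap is in the third paragraph, and you have in effect flagged it yourself. The assertion that $L_p\cap U$ lies in a single level set $\{f=c^*\}$ of the local first integral \emph{is} the lemma: it is exactly the statement that the leaf does not accumulate on a plaque other than its own. Your justification does not establish it. The function $f$ exists only on $U$, so constancy of $f$ along fibers tells you nothing about a fiber chain that leaves $U$ and re-enters on another plaque --- which is precisely how a non-closed leaf would behave; and the sentence ``analyticity of the $\pi_w$- and $\pi_z$-fibers prevents accumulation of distinct plaque values'' is an assertion, not an argument (leaves of holomorphic codimension-one foliations routinely meet a flow box in infinitely many plaques with accumulating transverse values, e.g.\ linear foliations of generic slope, so something specific to the complexified structure must be invoked). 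The point that actually has to be proved is that the doubly saturated set $L_p\cap U=\pi_w^{-1}(\pi_w(L_p\cap U))\cap\pi_z^{-1}(\pi_z(L_p\cap U))\cap U$, built from \emph{globally defined analytic} fibers, is an analytic subset of $U$ of dimension $2n-2$, whence the set of plaque values is discrete and the limit plaque belongs to $L_p$; none of this appears. Two further problems in the last step: you assume $\sing(\eta_\C\vert_{M^*_\C})$ has codimension at least two in $M^*_\C$, which is a hypothesis of Theorem \ref{cerveaulinsneto1} but not of Lemma \ref{lema2}; and even granting it, a singular codimension-one holomorphic foliation need not admit a single-valued first integral on a punctured neighborhood of its singular set (local first integrals are defined only up to reparametrization and need not patch), so the Riemann-extension step is not available. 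As it stands the proposal reduces the lemma to an equivalent unproved claim.
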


The following theorem, due to D. Cerveau and A. Lins Neto \cite{cerveauneto2011} is the key ingredient for finding normal forms of singular Levi-flat hypersurfaces.  

\begin{theorem}\label{cerveaulinsneto1} Let $M=\{F=0\}$ be a germ  of an irreducible real-analytic Levi-flat hypersurface at $0\in\mathbb{C}^n$, $n\geq 2$, with Levi 1-form $\eta$. Assume that the algebraic dimension of $\sing(M)$ is at most $2n-4$. Then there exists a unique germ at $0\in\C^n$ of codimension-one holomorphic foliation $\mathcal{F}_M$ tangent to $M$, if one of the following conditions is fulfilled:
\begin{itemize}
\item[(a)] $n\geq 3$ and $\cod_{M^*_\C} (\sing (\eta_\C\vert_{M^*_\C}))\geq 3$.
\item[(b)] $n\geq 2$, $\cod_{M^*_\C} (\sing (\eta_\C\vert_{M^*_\C}))\geq 2$ and $\mathcal{L}_\C$ has a non-constant holomorphic first integral.
\end{itemize}
Moreover, in both cases the foliation $\mathcal{F}_M$ has a non-constant holomorphic first integral $f$ such that $M=\{\mathcal{R}e(f)=0\}$.
\end{theorem}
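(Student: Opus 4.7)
The plan is to verify the hypotheses of Theorem~\ref{cerveaulinsneto1} and apply it to produce the foliation $\mathcal{F}_M$ together with a non-constant holomorphic first integral $g$ satisfying $M=\{\mathcal{R}e(g)=0\}$; the remaining task is then to normalize $g$ so that its $n$-jet at the origin coincides with $z_1\cdots z_n$. The algebraic dimension of $\sing(M)$ is exactly $2n-4$, since each $V_{ijk\ell}$ is cut out by four independent linear equations in $\mathbb{C}^{2n}$, so the algebraic dimension condition of Theorem~\ref{cerveaulinsneto1} is fulfilled.

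For the codimension condition on $\sing(\eta_\C|_{M^*_\C})$, I would combine the explicit description of $\sing(M_\C)=\bigcup V_{ijk\ell}$ with the hypothesis $\sing(M_\C)\subset\sing(\mathcal{L}_\C)$ and an order estimate on the complexified Levi form. Writing $\alpha=\sum_j\frac{\partial F_\C}{\partial z_j}\,dz_j$ and recalling that $\alpha|_{M^*_\C}$ defines $\mathcal{L}_\C$ (Remark~\ref{remark_fol}), the leading piece of $\alpha$ is $\sum_j\bigl(\prod_{k\neq j}z_k\bigr)dz_j$, and the assumption $H=O(|z|^{n+1})$ guarantees that the perturbation contributes terms of strictly higher order. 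A local analysis shows that the simultaneous vanishing of the partials $\partial_{z_j}(z_1\cdots z_n)$ forces at least two of the $z_k$'s to vanish; the symmetric discussion for $\beta$ produces the two analogous $w$-constraints. Together these reproduce the combinatorial structure of $V_{ijk\ell}$, giving $\cod_{M^*_\C}(\sing(\eta_\C|_{M^*_\C}))\geq 3$ for $n\geq 3$ and $\geq 2$ for $n=2$. For $n\geq 3$ one invokes case (a) of Theorem~\ref{cerveaulinsneto1}; for $n=2$ the function $z_1z_2$ is a non-degenerate Morse quadratic, so one is in the Burns--Gong setting \cite{burnsgong1999} that supplies a non-constant holomorphic first integral of $\mathcal{L}_\C$, allowing the application of case (b).

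Theorem~\ref{cerveaulinsneto1} thus yields $\mathcal{F}_M$ and some first integral $g$ with $M=\{\mathcal{R}e(g)=0\}$. Since $F$ and $\mathcal{R}e(g)$ define the same irreducible real-analytic germ, there is a real-analytic unit $u\in\mathcal{A}_{n\mathbb{R}}$ with $\mathcal{R}e(g)=uF$. Expanding in Taylor series and noting that a non-zero homogeneous holomorphic polynomial cannot be purely imaginary (its product with $i$ would be a real-valued holomorphic polynomial of positive degree, hence zero), one sees that $\mathrm{ord}_0(g)=\mathrm{ord}_0(\mathcal{R}e(g))=n$, with $n$-jet $g_n$ satisfying $\mathcal{R}e(g_n)=u(0)\mathcal{R}e(z_1\cdots z_n)$. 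Since $g_n-u(0)z_1\cdots z_n$ would then be a purely imaginary homogeneous holomorphic polynomial, it must vanish, so $g_n=u(0)z_1\cdots z_n$. Replacing $g$ by $g/u(0)$, after a harmless sign change if needed to make $u(0)$ positive, which remains a first integral of $\mathcal{F}_M$ and leaves the zero-set $\{\mathcal{R}e(g)=0\}$ unchanged, gives $f(z)=z_1\cdots z_n+O(|z|^{n+1})$ as required.

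The main obstacle is the codimension verification of the singular set of the complexified Levi foliation. One must exploit the combinatorial form of $V_{ijk\ell}$ together with the hypothesis $\sing(M_\C)\subset\sing(\mathcal{L}_\C)$ to argue that no new low-codimension components of $\sing(\eta_\C|_{M^*_\C})$ are created inside $M^*_\C$ by the higher-order perturbation $H$; controlling the interaction of $H$ with the degenerate factorization $z_1\cdots z_n$ transversally to each $V_{ijk\ell}$ is the technical heart of the argument.
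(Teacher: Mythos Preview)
Your proposal does not address the stated theorem. The statement reproduced above is the Cerveau--Lins Neto theorem, which the paper \emph{cites} from \cite{cerveauneto2011} without proof; your argument is instead aimed at Theorem~\ref{teorema3} (the $z_1\cdots z_n$ normal form), using Theorem~\ref{cerveaulinsneto1} as a black box. I evaluate it on those terms.

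The decisive gap is the codimension claim for $n\geq 3$. You assert $\cod_{M^*_\C}(\sing(\eta_\C|_{M^*_\C}))\geq 3$, arguing that vanishing of $\alpha$ forces two $z$-coordinates to vanish, vanishing of $\beta$ forces two $w$-coordinates to vanish, and that ``together'' this reproduces the codimension-four pattern $V_{ijk\ell}$. This conflates a union with an intersection. By Remark~\ref{remark_fol}, $\eta_\C|_{M^*_\C}=2i\alpha|_{M^*_\C}=-2i\beta|_{M^*_\C}$, so a point of $M^*_\C$ lies in $\sing(\eta_\C|_{M^*_\C})$ as soon as $\alpha|_{T_pM^*_\C}=0$; this happens whenever $\alpha_p=0$ in the ambient space \emph{or} whenever $\beta_p=0$ (since then $dF_\C|_p=\alpha_p$ and $\alpha$ restricts to zero on $\ker dF_\C$). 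Hence $\sing(\eta_\C|_{M^*_\C})$ contains, for each pair $i<j$, the locus $\{z_i=z_j=0\}\cap M^*_\C$ with \emph{no} constraint on the $w$-variables. On $\{z_i=z_j=0\}\cong\C^{2n-2}$ the equation of $M_\C$ becomes $w_1\cdots w_n+\text{(higher order)}=0$, a hypersurface of dimension $2n-3$; removing $\sing(M_\C)$ (dimension $2n-4$) still leaves a set of codimension exactly $2$ in $M^*_\C$. Case~(a) of Theorem~\ref{cerveaulinsneto1} is therefore unavailable.

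The paper accordingly goes through case~(b), and this is where the real work lies: Proposition~\ref{propo_teorema3} establishes a non-constant holomorphic first integral for $\mathcal{L}_\C$ by blowing up $M_\C$, checking that the exceptional locus is invariant for the pulled-back foliation, and computing that the linear holonomy of the relevant leaf consists of roots of unity, so that Lemma~\ref{lemaarturo} applies. Your outline omits this entire mechanism, and without it the argument for $n\geq 3$ collapses. Your treatment of $n=2$ via the Morse/Burns--Gong reduction and your normalization of the first integral to have $n$-jet $z_1\cdots z_n$ are fine and match the paper's concluding steps.
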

\par We recall that germ of holomorphic function $h$ is called a \textit{holomorphic first integral} for a germ of codimension-one holomorphic foliation $\mathcal{F}$ if its zeros set is contained in $\sing(\mathcal{F})$ and its level hypersurfaces contain the leaves of $\mathcal{F}$.
\section{Normal forms for a quasihomogeneous polynomial}
The \textit{local algebra} of $f\in\mathcal{O}_n$ is defined as
$$
A_f = \frac{\mathcal{O}_n}{ \left\langle \frac{\partial f}{\partial z_1},...,\frac{\partial f}{\partial z_n}\right\rangle}.
$$
The number $\mu(f,0)=dim_{\C}(A_f)$ is the Milnor number of $f$ at $0\in\C^n$. This number is finite if and only if $f$ has an isolated singularity at the origin. With these definitions, Morse  lemma may be stated as follows: if $0\in\C^n$ is an isolated singularity of $f\in\mathcal{O}_n$ with $\mu(f,0)=1$, then $f$ is right equivalent to is second jet $j_0^2(f)$. The Morse lemma has the following generalization, and the proof can be found in \cite{arnold1985}.

\begin{theorem}
 If $f\in\mathcal{M}_n$ has an isolated singularity at $0\in\C^n$ with Milnor number $\mu$, then $f$ is right equivalent to $j_0^{\mu+1} (f)$.
\end{theorem}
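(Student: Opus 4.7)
The plan is to derive the theorem from the Mather--Tougeron finite determinacy criterion, whose input is a purely algebraic inclusion guaranteed by finiteness of the Milnor number.

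First I would establish the key inclusion
$$\mathcal{M}_n^{\mu+2} \;\subset\; \mathcal{M}_n^{2}\cdot J(f),\qquad J(f):=\Bigl\langle \tfrac{\partial f}{\partial z_1},\ldots,\tfrac{\partial f}{\partial z_n}\Bigr\rangle.$$
Because $f$ has an isolated singularity, the local algebra $A_f=\mathcal{O}_n/J(f)$ is Artinian over $\mathbb{C}$ of dimension $\mu$, so its maximal ideal is nilpotent. A descending-chain argument on the images of $\mathcal{M}_n^{k}$ in $A_f$ (equivalently, Nakayama applied to the $\mathcal{O}_n$-module $\mathcal{M}_n^{\mu}+J(f)$) gives $\mathcal{M}_n^{\mu}\subset J(f)$, and multiplying by $\mathcal{M}_n^{2}$ yields the required inclusion.

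Next I would invoke the finite determinacy criterion: if $\mathcal{M}_n^{k+1}\subset \mathcal{M}_n^{2}\cdot J(f)$, then $f\sim_R j_0^{k}(f)$. Applied with $k=\mu+1$, this gives the theorem. The criterion itself is proved by the Thom--Levine homotopy method. Setting $R:=f-j_0^{\mu+1}(f)\in\mathcal{M}_n^{\mu+2}$ and $f_t:=j_0^{\mu+1}(f)+tR$, the task reduces to producing a holomorphic one-parameter family of germs of vector fields $X_t$ with $X_t(0)=0$ satisfying $X_t(f_t)=-R$ for $t\in[0,1]$. Integrating the non-autonomous flow $\dot z=X_t(z)$ then produces a family $\phi_t$ of biholomorphisms fixing the origin with $f_t\circ\phi_t\equiv j_0^{\mu+1}(f)$, and $\phi_1$ realises the desired right equivalence.

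The main obstacle is that the algebraic step only provides $R\in\mathcal{M}_n^{2}\cdot J(f)$, whereas the equation $X_t(f_t)=-R$ demands $R\in\mathcal{M}_n^{2}\cdot J(f_t)$ with holomorphic dependence on $t$. Since the partial derivatives of $tR$ all lie in $\mathcal{M}_n^{\mu+1}$, one has $J(f_t)\equiv J(f)\pmod{\mathcal{M}_n^{\mu+1}}$; combining this congruence with $\mathcal{M}_n^{\mu+2}\subset \mathcal{M}_n^{2}\cdot J(f)$ and applying Nakayama's lemma to the finitely generated $\mathcal{O}_n$-module $\mathcal{M}_n^{2}\cdot J(f_t)$ propagates the inclusion uniformly in $t\in[0,1]$. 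One then chooses coefficients $a_{i,t}\in\mathcal{M}_n^{2}$, holomorphic in $t$, with $R=\sum_{i}a_{i,t}\,\partial f_t/\partial z_i$, and the vector field $X_t:=-\sum_{i}a_{i,t}\,\partial/\partial z_i$ does the job; since its coefficients vanish to order $2$ at the origin, the resulting $\phi_1$ is tangent to the identity, completing the proof.
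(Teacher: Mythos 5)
Your argument is correct and coincides with the standard proof of Tougeron's finite determinacy theorem; the paper itself gives no proof, deferring to \cite{arnold1985}, where exactly this route is taken (the inclusion $\mathcal{M}_n^{\mu}\subset J(f)$ via the descending chain and Nakayama, followed by the Thom--Levine homotopy method applied to $f_t=j_0^{\mu+1}(f)+tR$). The only point you compress is the holomorphic dependence of the coefficients $a_{i,t}$ on $t$, which is routinely settled by running the Nakayama argument relatively in $\mathcal{O}_{n+1}$ with $t$ as an extra variable and covering $[0,1]$ by finitely many such local solutions.
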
 
\begin{definition}\label{definicaoquasihomogeneo}
A germ of function $f\in\mathcal{O}_n$ is \textit{quasihomogeneous} with weights $w_1, ...,w_n\in\mathbb{Z}^*_+$ if, for each $\lambda\in\C^*$,
$$
f(\lambda^{w_1} z_1,...,\lambda^{w_n} z_n)=\lambda^d f(z_1,...,z_n).
$$
The number $d$ is the \textit{quasihomogeneous degree} of it.
\end{definition}
The previous definition is equivalent to the following: $f(z)$ is quasihomogeneous of \textit{type} $(w_1,...,w_n)$ if it can be expressed as a linear combination of monomials $z_1^{i_1}z_2^{i_2}\cdots z_n^{i_n}$ for which the equality
$$
i_1w_1+...+i_nw_n=d
$$
holds. The number $d$ is the quasihomogeneous degree defined above. 

\begin{definition}  
 The Newton support of germ $f=\sum a_{i_1\ldots i_n}x^{i_1}\ldots x^{i_n}$ is defined as 
$$supp(f)=\{(i_1,\ldots,i_n): a_{i_1\ldots i_n}\neq 0\}.$$
\end{definition}

In the above situation, if $f=\sum a_{I}x^{I}$, $I=(i_{1},\ldots,i_{n})$, $x^{I}=x_{1}^{i_{1}}\ldots x^{i_{n}}$, then $$supp(f)\subset\Gamma=\{I:w_{1}i_{1}+\ldots+w_{n}i_{n}=d\}.$$ The set $\Gamma$ is called the diagonal. One can define the quasihomogeneous filtration of the ring $\mathcal{O}_{n}$. It consists of the decreasing family of ideals $\mathcal{A}_{d}\subset\mathcal{O}_{n}$, $\mathcal{A}_{d'}\subset\mathcal{A}_{d}$ for $d<d'$. Here $\mathcal{A}_{d}=\{Q:$ degrees of monomials from $supp(Q)$ are $deg(Q)\geq d\}$; (the degree is quasihomogeneous). When $i_{1}=\ldots=i_{n}=1$, this filtration coincides with the usual filtration by the usual degree.

\begin{definition} A function $f$ is \emph{semiquasihomogeneous} if $f=Q+F'$, where $Q$ is quasihomogeneous of quasihomogeneous degree $d$ and $\mu(Q,0)<\infty$, and $F'\in\mathcal{A}_{d'}$, $d'>d$.
\end{definition}

From \cite{arnold1974} we have the following result of V.I. Arnold.

\begin{theorem}\label{lemaarnold}
Let $f=Q+F'$ be a semiquasihomogeneous function. Then $f$ is right-equivalent to a function $\displaystyle Q(z)+\sum_j c_je_j(z)$ where $e_1,...,e_j$ are elements of the monomial basis of the local algebra $A_Q$ of quasihomogeneous degree strictly greater than $d$
 and $c_j\in \C$.
\end{theorem}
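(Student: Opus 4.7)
The plan is to prove Arnold's theorem by the path method (Moser's homotopy trick) applied degree-by-degree along the quasihomogeneous filtration $\mathcal{A}_\bullet$, reducing each step to a linear-algebra computation in the local algebra $A_Q$, and then to upgrade the resulting formal transformation to an analytic one via finite determinacy. The finiteness of $\mu(Q,0)$ plays a double role: it furnishes a finite graded monomial basis $\{e_1,\ldots,e_s\}$ of $A_Q$, and it ensures finite determinacy of $f$ through Theorem 3.1 of the paper.

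The first ingredient is Euler's identity: because $Q$ is quasihomogeneous of degree $d$,
$$d\cdot Q(z)=\sum_{i=1}^{n} w_i\,z_i\,\frac{\partial Q}{\partial z_i}(z),$$
so $Q$ lies in its own Jacobian ideal $J(Q):=\langle \partial Q/\partial z_1,\ldots,\partial Q/\partial z_n\rangle$. Since each generator of $J(Q)$ is quasihomogeneous, $J(Q)$ is a graded ideal, and consequently $A_Q=\mathcal{O}_n/J(Q)$ decomposes as a direct sum of finite-dimensional graded pieces $(A_Q)_k$, each spanned by those basis elements $e_j$ with quasihomogeneous degree $k$.

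The key splitting step is then the following. For a germ $G\in\mathcal{O}_n$ that is quasihomogeneous of degree $D>d$, the class $[G]$ in $(A_Q)_D$ can be written uniquely as $\sum_{\deg e_j=D} c_j[e_j]$, whence
$$G(z)=\sum_{i=1}^{n} h_i(z)\,\frac{\partial Q}{\partial z_i}(z)+\sum_{\deg e_j=D} c_j\,e_j(z),$$
with $h_i$ quasihomogeneous of degree $D-d+w_i\geq 1$. The first sum is the Lie derivative $X(Q)$ along the holomorphic vector field $X=\sum_i h_i\,\partial/\partial z_i$, which vanishes at the origin. Iterating, I would decompose $F'=\sum_{D>d}F'_D$ into quasihomogeneous pieces, apply the splitting to the lowest $D_0$ still present, and pull $f$ back by the time-one flow of $-X_{D_0}$; Taylor's formula shows that this replaces the $F'_{D_0}$ term by $\sum_{\deg e_j=D_0} c_j^{(D_0)}\,e_j$ while perturbing only terms of strictly higher quasihomogeneous degree. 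Composing these flows along the filtration produces a formal biholomorphism sending $f$ to $Q+\sum_j c_j e_j$.

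The main obstacle is converting this formal construction into an analytic one. I would invoke Theorem 3.1 of the paper: since $\mu(Q,0)<\infty$, the semiquasihomogeneous function $f$ is $(\mu+1)$-determined, so only finitely many iterations of the splitting step are actually required, and the resulting biholomorphism is polynomial — in particular convergent. The delicate technical point inside the splitting itself is verifying that the coefficients $h_i$ can be chosen quasihomogeneous of the prescribed degrees, and this rests precisely on the graded structure of $J(Q)$, which in turn requires the quasihomogeneity of $Q$ rather than only the semiquasihomogeneity of $f$.
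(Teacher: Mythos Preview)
The paper does not supply its own proof of this theorem; it is quoted verbatim from Arnold \cite{arnold1974} and used as a black box in the proof of Theorem~\ref{teorema1}. Your sketch is precisely Arnold's original argument: use the graded structure of $J(Q)$ to split each quasihomogeneous piece $F'_D$ into a part in $J(Q)$ (absorbed by a coordinate change $z\mapsto z+h(z)$) and a remainder in the span of the $e_j$, iterate along the filtration, and invoke finite determinacy to handle convergence. This is correct.

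One small imprecision: the finitely many coordinate changes do not by themselves produce the normal form exactly; they yield $Q+\sum c_j e_j+R$ with $R$ of ordinary order $>\mu+1$. The final step killing $R$ comes from $(\mu+1)$-determinacy (your Theorem~3.1) and is analytic but not polynomial, so the composite biholomorphism is analytic rather than polynomial as you state. This does not affect the validity of the argument.
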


\begin{example}
 Let $f=Q+F'$, where $Q(x,y)=x^{2}y+y^{k}$, then $f$ is right equivalent to $Q$. Indeed, the basis of the local algebra $$A_Q=\mathcal{O}_{2}/\langle xy,x^{2}+ky^{k-1}\rangle$$  is $1,x,y,y^{2},\ldots,y^{k-1}$. Here $\mu(Q,0)=k+1$.
\end{example}
In the proof of Theorem \ref{teorema1}, we will used the following Lemma of Saito \cite{saito1971}.
\begin{lemma}\label{lemaqh}
If $f\in\mathcal{M}_2$ is a complex quasihomogeneous polynomial, then $f$ factors itself uniquely as
$$
f(z_1,z_2)=\mu z_1^mz_2^n\prod_{\ell=1}^k (z_2^p-\lambda_{\ell} z_1^q),
$$
where $m,n,p,q\in\mathbb{Z}_+^*$,  $\mu,\lambda_{\ell}\in\C^*$ for each $\ell=1,...,k$, and $\gcd(p,q)=1$.
\end{lemma}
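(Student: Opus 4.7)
The plan is to reduce the statement to the fundamental theorem of algebra by using quasihomogeneity to pin down the possible monomials of $f$ and then factoring a one-variable polynomial.

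Let $(w_1,w_2)$ be the weights of $f$ and $d>0$ its quasihomogeneous degree; set $e=\gcd(w_1,w_2)$ and write $w_1=ep$, $w_2=eq$ with $\gcd(p,q)=1$. These will be the $p,q$ of the statement. First I would extract the maximal powers of the coordinate axes from $f$: let $m\geq 0$ be the largest integer with $z_1^m\mid f$, and $n\geq 0$ the largest with $z_2^n\mid f/z_1^m$. Then $f=z_1^m z_2^n\,h$, where $h$ is quasihomogeneous of degree $d':=d-mw_1-nw_2$ and, by maximality of $m$ and $n$, contains at least one monomial not divisible by $z_1$ and at least one not divisible by $z_2$.

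Next I would pin down the monomials of $h$. For a monomial $z_1^a z_2^b$ of $h$, quasihomogeneity gives $ap+bq=d'/e$. The monomial with $a=0$ forces $q\mid d'/e$, the one with $b=0$ forces $p\mid d'/e$, and hence $pq\mid d'/e$ because $\gcd(p,q)=1$. Writing $d'/e=kpq$, reducing $ap+bq=kpq$ modulo $q$ and $p$, and again using $\gcd(p,q)=1$, one deduces $q\mid a$ and $p\mid b$, whence $(a,b)=(qj,p(k-j))$ for some $0\leq j\leq k$. Therefore
$$h(z_1,z_2)=\sum_{j=0}^{k}c_j\,z_1^{qj}z_2^{p(k-j)}=P(z_1^q,z_2^p),$$
where $P(u,v)=\sum_j c_j u^j v^{k-j}$ is an ordinary binary form of degree $k$, with $c_0,c_k\neq 0$.

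Finally I would apply the fundamental theorem of algebra to $P(t,1)=\sum_j c_j t^j$, whose roots $\alpha_1,\ldots,\alpha_k$ lie in $\C^*$ because $c_0\neq 0$. This gives $P(u,v)=c_k\prod_\ell(u-\alpha_\ell v)$; rewriting each factor as $u-\alpha_\ell v=-\alpha_\ell(v-\alpha_\ell^{-1}u)$, setting $\lambda_\ell:=\alpha_\ell^{-1}\in\C^*$, and computing the overall constant via the Vieta relation $c_k\prod_\ell(-\alpha_\ell)=c_0$, one obtains $h=c_0\prod_{\ell=1}^{k}(z_2^p-\lambda_\ell z_1^q)$. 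Multiplying by $z_1^m z_2^n$ yields the claimed factorization with $\mu=c_0$, and uniqueness follows from unique factorization in $\C[z_1,z_2]$. There is no serious obstacle here: the argument is a short arithmetic reduction followed by a one-variable factorization. The only point requiring care is the double use of $\gcd(p,q)=1$ in the divisibility analysis of the exponents, which is what collapses the monomial support of $h$ to a single-parameter family and thereby reduces the problem to a binary form of degree $k$.
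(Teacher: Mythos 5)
Your proof is correct. Note that the paper itself offers no proof of this lemma: it is quoted verbatim from Saito \cite{saito1971}, so there is no in-paper argument to compare against. Your route --- normalize the weights to $(ep,eq)$ with $\gcd(p,q)=1$, strip off the maximal monomial factor $z_1^mz_2^n$, use the single linear relation $ap+bq=d'/e$ on the Newton support together with the two extreme monomials to force $pq\mid d'/e$ and to collapse the support of $h$ onto the arithmetic progression $(qj,\,p(k-j))$, and then factor the resulting binary form $P(u,v)$ by the fundamental theorem of algebra in the substituted variables $u=z_1^q$, $v=z_2^p$ --- is exactly the standard elementary derivation, and every step checks out, including the Vieta computation identifying $\mu=c_0$ and the degenerate case $k=0$ where $h$ is a unit. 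Two small points worth making explicit if you write this up: the uniqueness claim rests on the irreducibility of $z_2^p-\lambda z_1^q$ in $\C[z_1,z_2]$, which holds precisely because $\gcd(p,q)=1$ (any factorization would be quasihomogeneous and a proper power $g^s$ would force $s\mid p$ and $s\mid q$), so that appealing to unique factorization is legitimate but deserves the one extra sentence; and the exponents $m,n$ in your construction are allowed to vanish, which is in fact needed (the paper itself later runs through the cases $m=0$ and $n=0$), so the statement's ``$m,n\in\mathbb{Z}_+^*$'' should be read as $m,n\geq 0$.
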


\section{Weighted projective varieties and weighted blow-ups}

In this section we present an overview of weighted projective spaces and weighted blow-ups. We refer to \cite{dolgachev1982} and \cite {kollar2007} for a more extensive presentation of the subject.

\par Let $\sigma:=(a_{0},\ldots,a_{n})$ be positive integers. The group $\mathbb{C}^{*}$ acts on
$\mathbb{C}^{n+1}\backslash\{0\}$ by
$$\lambda\cdot(x_{0},\ldots,x_{n})=(\lambda^{a_{0}}x_{0},\ldots,\lambda^{a_{n}}x_{n}).$$ The quotient space under this action is the weighted projective space of
type $\sigma$, $\mathbb{P}(a_{0},\ldots,a_{n}):=\mathbb{P}_{\sigma}$. In case $a_{i}>1$ for some $i$, $\mathbb{P}_{\sigma}$ is a compact algebraic variety with cyclic
quotient singularities.
\par Let $[x_{0}:\ldots:x_{n}]$ be the homogeneous coordinates on $\mathbb{P}(a_{0},\ldots,a_{n})$. The affine piece $x_{i}\neq 0$ is isomorphic
to $\mathbb{C}^{n}/\mathbb{Z}_{a_{i}}$, here $\mathbb{Z}_{a_{i}}$ denote the quotient group modulo $a_{i}$. 
Let $\epsilon$ be an $a_{i}^{th}$-primitive root of unity. The group acts by
$$z_{j}\longmapsto\epsilon^{a_{j}}z_{j}$$
for all $j\neq i$, on the coordinates $(z_{0},\ldots,\hat{z_{i}},\ldots,z_{n})$ of $\mathbb{C}^{n}$; here $z_{j}$ is thought of
 as $x_{j}/x_{i}^{1/a_{i}}$. Compare this to the case of $\mathbb{P}^{n}$ where the affine coordinates on $x_{i}\neq 0$ are $z_{j}=x_{j}/x_{i}$.

\begin{definition}
 $\mathbb{P}(a_{0},\ldots,a_{n})$ is well-formed if  for each i
$$\gcd(a_{0},\ldots,\hat{a}_{i},\ldots,a_{n})=1.$$
\end{definition}
\noindent We have a natural orbifold map $\phi_{\sigma}:\mathbb{P}^{n}\rightarrow\mathbb{P}_{\sigma}$ defined by
\begin{align}\label{quotient-map}
[x_{0}:\ldots:x_{n}]\mapsto[x_{0} ^{a_{0}}:\ldots:x_{n} ^{a_{n}}]_{\sigma}
\end{align}

\begin{definition}
Let $X$ be a closed subvariety of a weighted projective space $\mathbb{P}_{\sigma}$, and let
$\rho:\mathbb{C}^{n+1}\backslash\{0\}\rightarrow\mathbb{P}_{\sigma}$ be the canonical projection.
The punctured affine cone $C^{*}_{X}$ over $X$ is given by $C^{*}_{X}=\rho^{-1}(X)$, and the affine cone $C_{X}$ over $X$ is
the completion of $C^{*}_{X}$ in $\mathbb{C}^{n+1}$.
\end{definition}
\par Observe that $\mathbb{C}^{*}$ acts on $C^{*}_{X}$ giving $X=C^{*}_{X}/\mathbb{C}^{*}$. Note that we have the following fact. 
\begin{lemma}
$C^{*}_{X}$ has no isolated singularities.
\end{lemma}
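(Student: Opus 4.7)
The plan is to exploit the $\mathbb{C}^{*}$-action that defines $\mathbb{P}_{\sigma}$. By construction, the action
\[
\lambda\cdot(x_{0},\ldots,x_{n})=(\lambda^{a_{0}}x_{0},\ldots,\lambda^{a_{n}}x_{n})
\]
is holomorphic on $\mathbb{C}^{n+1}\setminus\{0\}$, and its orbits are precisely the fibers of the quotient map $\rho\colon\mathbb{C}^{n+1}\setminus\{0\}\to\mathbb{P}_{\sigma}$. Since $C^{*}_{X}=\rho^{-1}(X)$ is a union of such fibers, it is invariant under this action. Consequently, for every $\lambda\in\mathbb{C}^{*}$, the biholomorphism of $\mathbb{C}^{n+1}\setminus\{0\}$ given by multiplication by $\lambda$ restricts to a biholomorphism of $C^{*}_{X}$ onto itself.

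Next I would observe that each orbit of this action is one-dimensional. Indeed, for any $p=(x_{0},\ldots,x_{n})\in\mathbb{C}^{n+1}\setminus\{0\}$ there exists some $i$ with $x_{i}\neq 0$, and then the map $\lambda\mapsto\lambda^{a_{i}}x_{i}$ is non-constant; so the orbit $\mathbb{C}^{*}\cdot p$ has positive dimension, i.e.\ dimension exactly one (the stabilizer is finite). In particular, no orbit is a single point.

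Now suppose $p\in C^{*}_{X}$ is a singular point of $C^{*}_{X}$. Because the $\mathbb{C}^{*}$-action acts on $C^{*}_{X}$ by biholomorphisms, it must preserve the singular locus $\operatorname{\textsf{Sing}}(C^{*}_{X})$. Hence the whole orbit $\mathbb{C}^{*}\cdot p\subset\operatorname{\textsf{Sing}}(C^{*}_{X})$ is contained in the singular locus. Since this orbit is one-dimensional and passes through $p$, the point $p$ is not isolated in $\operatorname{\textsf{Sing}}(C^{*}_{X})$. Therefore $C^{*}_{X}$ has no isolated singularities.

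There is no real obstacle here; the entire argument is essentially a homogeneity/equivariance remark. The only point that deserves a line of justification is that the orbits are genuinely one-dimensional (which fails at the origin, and is precisely why one works with the punctured cone $C^{*}_{X}$ rather than $C_{X}$).
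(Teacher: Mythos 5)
Your argument is correct: the $\mathbb{C}^{*}$-action restricts to biholomorphisms of $C^{*}_{X}$, hence preserves its singular locus, and since every orbit in $\mathbb{C}^{n+1}\setminus\{0\}$ has finite stabilizer and is therefore one-dimensional, no singular point can be isolated. The paper states this lemma as a known fact without proof, and your homogeneity/equivariance argument is exactly the standard justification one would supply.
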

\begin{definition}
We say that $X$ in $\mathbb{P}_{\sigma}$ is quasi-smooth of dimension $m$ if its affine cone $C_{X}$ is smooth of dimension $m+1$ outside its vertex $0\in\mathbb{C}^{n+1}$.
\end{definition}
\par When $X\subset\mathbb{P}_{\sigma}$ is quasi-smooth the singularities of $X$ are given by the $\mathbb{C}^{*}$-action and hence are
cyclic quotient singularities. Notice that this definition is not equivalent to the smoothness
of the inverse image $\phi_{\sigma}^{-1}(X)$ under the quotient map given in (\ref{quotient-map}).
\par Another important fact (cf. \cite{dolgachev1982}, Theorem 3.1.6) is that a quasi-smooth subvariety $X$ of $\mathbb{P}_{\sigma}$ is a
$V$-variety, that is, a complex space which is locally
isomorphic to the quotient of a complex manifold by a finite group of holomorphic automorphisms.
\par Now, let $X=\mathbb{C}^n/\mathbb{Z}_{m}(a_{1},\ldots,a_{n})$ be a cyclic quotient singularity. That is, $X$ is the quotient variety $\mathbb{C}^{n}/\tau$,
where $\tau$ is given by
$$x_{i}\longmapsto\epsilon^{a_{i}}x_{i}$$
for all i, where $\epsilon$ is a $m^{th}$-primitive root of unity.  
\subsection{Weighted blow-ups}
In this part we will construct the blow-up of $X$. First, we describe $X$ using the theory of toric varieties (cf. \cite{fulton1993}). Let
\begin{align*}
e_{1}=(1,0,\ldots,0),\ldots,e_{n}=(0,\ldots,0,1)\,\, \text{and}\,\, e=\frac{1}{m}(a_{1},\ldots,a_{n}).
\end{align*}
Then $X=\mathbb{C}^n/\mathbb{Z}_{m}(a_{1},\ldots,a_{n})$ is the toric variety corresponding to the lattice $N=\mathbb{Z}e_{1}+\ldots+\mathbb{Z}e_{n}
+\mathbb{Z}e$ and the cone $C=\mathbb{R}_{\geq 0}e_{1}+\ldots +\mathbb{R}_{\geq 0}e_{n}$. Denote by $\bigtriangleup$ the fan associated to $X$ consisting of all the faces of $C$.
\par Take $\nu=\frac{1}{m}(a_{1},\ldots,a_{n})\in N$ with $a_{1},\ldots,a_{n}>0$ and assume that $e_{1},\ldots,e_{n}$ and $\nu$
generate the lattice $N$. Such $\nu\in N$ will be called a weight. We can construct the weighted blow-up
$$E:\tilde{X}\rightarrow X=\mathbb{C}^n/\mathbb{Z}_{m}(a_{1},\ldots,a_{n})$$ with weight $\nu$ as follows: we divide the cone $C$ by adding the 1-dimensional cone $\mathbb{R}_{\geq 0}\nu$, that is, we divide $C$ into $n$ cones
\begin{align*}
C_{i}=\mathbb{R}_{\geq 0}e_{1}+\ldots+\overset{i-th}{\overbrace{\mathbb{R}_{\geq 0}\nu}}+\ldots+\mathbb{R}_{\geq 0}e_{n}\,\,\,\,\,\,\,(i=1,\ldots,n).
\end{align*}
Let $\bigtriangleup'$ be the fan consisting of all the faces of $C_{1},\ldots,C_{n}$. Then $\tilde{X}$ is the toric variety corresponding to $N$
 and $\bigtriangleup'$, while $E$ is the morphism induced from the natural map of fans $(N,\bigtriangleup')\rightarrow(N,\bigtriangleup)$.
\par The variety $\tilde{X}$ is covered by $n$ affine open sets $\tilde{U}_{1},\ldots,\tilde{U}_{n}$ which correspond to the cones $C_{1},\ldots,C_{n}$ respectively.
These affine open sets and $E$ are described as follows:
\begin{equation}
\tilde{U_{i}}=\mathbb{C}^n/\mathbb{Z}_{a_{i}}(-a_{1},\ldots,\overset{i-th}{\overbrace{m}},\ldots,-a_{n})
\end{equation}
\begin{equation}
E|_{\tilde{U_{i}}}:\tilde{U_{i}}\ni(y_{1},\ldots,y_{n})\longmapsto(y_{1}y_{i}^{a_{1}/m},\ldots,\overset{i-th}{\overbrace{y_{i} ^{a_{i}/m}}},\ldots,y_{n}y_{i} ^{a_{n}/m})\in X.
\end{equation}
\par The exceptional divisor $D$ of $E$ is isomorphic to the weighted projective space $\mathbb{P}(a_{1},\ldots,a_{n})$ and
$D\cap\tilde{U_{i}}=\{y_{i}=0\}/\mathbb{Z}_{a_{i}}$.

\section{First integral for the Levi foliation and the proof of Theorem \ref{teorema1}}
In this section, we give sufficient conditions (dynamical criteria) to find a non-constant holomorphic first integral for the complexification of the Levi foliation $\mathcal{L}_{\mathbb{C}}$ on $M_{\mathbb{C}}$ and then we prove Theorem \ref{teorema1}.
\par Let $\pi$ be a weighted blow-up  on  $M_{\mathbb{C}}$ with exceptional divisor $E$. Denote by $\tilde{M}_\C$ the strict transform of $M_\C$ by $\pi$ and by $\tilde{\mathcal{F}}=\pi^*(\mathcal{L}_\C)$ the induced foliation on $\tilde{M}_\C$. Suppose that $\tilde{M}_\C$ is a smooth variety and consider $\tilde{C}=\tilde{M}_\C\cap E$. Assume that $\tilde{C}$ is invariant by $\tilde{\mathcal{F}}$; i.e., it is a union of leaves and singularities of $\tilde{\mathcal{F}}$. 
\par Let $S:=\tilde{C}\setminus\sing(\tilde{\mathcal{F}})$. Then $S$ is a smooth leaf of $\tilde{\mathcal{F}}$. Take a point $p_0$ in $S$ and a transverse section $\Sigma$ passing through $p_0$. Let $G\subset\dif(\Sigma, p_0)$ be the holonomy group of the leaf $S$; since $dim(\Sigma)=1$, we assume that $G\subset \dif(\Sigma,0)$. In this context, we have the following result of Fern\'andez-P\'erez \cite{perez2014}.
\begin{lemma}\label{lemaarturo} Assume the following:
\begin{itemize}
\item[(a)] For any $p\in S\setminus \sing(\tilde{\mathcal{F}})$, the leaf $L_p$ of $\tilde{\mathcal{F}}$ through $p$ is closed in $S$.
\item[(b)] $g'(0)$ is a primitive root of unity, for all $g\in G$, $g\neq id$.
\end{itemize}
Then $\mathcal{L}_\C$ has a non-constant holomorphic first integral.
\end{lemma}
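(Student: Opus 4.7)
The plan is to run a Mattei-Moussu type argument adapted to this setting: first show the holonomy group $G$ is finite cyclic, then construct a local invariant on $\Sigma$, and finally extend to a first integral of $\mathcal{L}_\C$.

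To analyze $G$, I would first observe that, after shrinking $\Sigma$ to a small disk around $p_0$, the orbit of any $p\in \Sigma$ under $G$ coincides with $L_p\cap \Sigma$, where $L_p$ is the leaf of $\tilde{\mathcal{F}}$ through $p$. Hypothesis (a) makes $L_p$ closed, so the orbit is a closed discrete subset of a disk, hence finite. Using (b), I would then argue that every $g\in G$ has finite order: if $g'(0)$ is a primitive $n$-th root of unity and $g^n\neq id$, then $g^n$ is a nontrivial parabolic germ, whose orbits near $0$ accumulate at $0$ (classical dynamics of tangent-to-identity germs in $\dif(\C,0)$), contradicting the finiteness of orbits just established.

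Next I would upgrade this to finiteness of $G$ itself. Since $\tilde{C}$ is a projective curve with finitely many points removed to form $S$, $\pi_1(S,p_0)$ --- and therefore $G$ --- is finitely generated. For any $g_1,g_2\in G$ the commutator $[g_1,g_2]$ has derivative $1$ at the origin and is torsion, so it must be the identity; hence $G$ is abelian. The derivative map $g\mapsto g'(0)$ is then injective on $G$ (by the same argument) and lands in the roots of unity, so $G$ is a finitely generated abelian torsion group embedded in $\C^*$, forcing it to be finite cyclic of some order $N$.

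With $G$ finite cyclic, Bochner averaging linearizes a generator: there is a holomorphic coordinate $\tau$ on $\Sigma$ in which $G$ acts by $\tau\mapsto \zeta\tau$ with $\zeta$ a primitive $N$-th root of unity. The function $h=\tau^N$ is $G$-invariant and extends by holonomy transport to a holomorphic first integral of $\tilde{\mathcal{F}}$ on a neighborhood of $S$ in $\tilde{M}_\C\setminus\sing(\tilde{\mathcal{F}})$; the Riemann extension theorem handles the finitely many singular points of $\tilde{\mathcal{F}}$ on $\tilde{C}$, and the result descends through the proper morphism $\pi$ to a non-constant holomorphic first integral of $\mathcal{L}_\C$. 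The main obstacle is the passage from pointwise torsion of $G$ to its finiteness: neither hypothesis alone suffices, since (b) a priori admits torsion groups like $\mathbb{Q}/\mathbb{Z}\subset \C^*$ and (a) does not immediately yield abelianness; combining them requires that $\pi_1(S)$ be finitely generated, which is where the compactness of $\tilde{C}$ enters.
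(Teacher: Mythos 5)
The paper does not actually prove this lemma: it is imported verbatim from Fern\'andez-P\'erez \cite{perez2014}, so there is no in-paper argument to compare against. Your reconstruction is the standard Mattei--Moussu-type argument that underlies the cited result, and its architecture is correct: finite orbits from (a); torsion of each $g\in G$ from (b) via the Leau--Fatou dynamics of a nontrivial tangent-to-identity germ; triviality of tangent-to-identity elements, hence injectivity of $g\mapsto g'(0)$, abelianness, and finiteness of the finitely generated group $G$; linearization of the resulting cyclic group by averaging; the invariant $\tau^N$; extension by holonomy transport plus Riemann extension across $\sing(\tilde{\mathcal{F}})$; and descent through the blow-up. Your closing remark correctly identifies why both hypotheses are needed and where finite generation of $\pi_1(S)$ enters.

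Two steps deserve more care than you give them. First, ``the orbit is a closed discrete subset of a disk, hence finite'' is not literally valid: a closed discrete subset of an open disk may be infinite, accumulating only on the boundary. The standard repair is either to note that a closed leaf has analytic closure (Remmert--Stein), so that $\overline{L_p}\cap \overline{\Sigma'}$ is a compact zero-dimensional analytic set for a slightly shrunken closed transversal $\Sigma'$, or to invoke the Mattei--Moussu finite-orbit lemma, which exploits the uniform domains of definition of the holonomy generators. Second, $\tilde{C}$ need not be a projective curve minus finitely many points: in the application to Theorem 3 it is higher-dimensional. What your argument actually requires is only that $S$ be a quasi-projective variety, hence with finitely generated fundamental group, and that $\dim\Sigma=1$; both hold in the setting of the lemma. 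Neither point invalidates the approach.
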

 \par To continue, we use the above lemma to prove the following proposition.
 \begin{proposition}\label{proposition_integral}
 Let $M$ be a germ of an irreducible singular real-analytic Levi-flat hypersurface at $0\in\mathbb{C}^2$ satisfying the hypotheses of Theorem \ref{teorema1}.
Then we have the following:
\begin{enumerate}
\item[(a)] the algebraic dimension of $\sing(M)$ is $0$;
\item[(b)] $\cod_{M^*_\C} (\sing (\mathcal{L}_{\C}))=2$;
\item[(c)] $\mathcal{L}_\C$ has a non-constant holomorphic first integral. 
\end{enumerate}
 \end{proposition}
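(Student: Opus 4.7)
My plan is to prove the three assertions in order, each leveraging the quasihomogeneous structure of $Q$ and preparing the ground for an application of Lemma~\ref{lemaarturo}.

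For (a), I would work with the complexification $F_\C(z,w) = \tfrac{1}{2}(Q(z) + Q(w)) + H_\C(z,w)$ on $\C^4$. I endow $(z_1,z_2,w_1,w_2)$ with the quasihomogeneous weights $(w_1,w_2,w_1,w_2)$, so that $Q(z)+Q(w)$ is quasihomogeneous of degree $d$ and, by hypothesis (b), has isolated singularity at $0\in\C^4$ (its partial derivatives are $\partial_{z_j}Q(z)$ and $\partial_{w_j}Q(w)$, which vanish simultaneously only at the origin). Since $H$ has standard order strictly greater than $d$ and the weights are positive integers, each monomial of $H_\C$ has quasihomogeneous degree $>d$. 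Thus $F_\C$ is semiquasihomogeneous on $\C^4$ with leading part of finite Milnor number; by the standard stability of isolated singularities under higher quasihomogeneous perturbations (Theorem~\ref{lemaarnold} and Definition~3.5), $F_\C$ itself has an isolated critical point at $0$. Therefore $\sing(M_\C) = M_\C\cap \crit(F_\C)=\{0\}$, giving $\dm_\C\sing(M_\C)=0$.

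For (b), I would use Remark~\ref{remark_fol}: on $M^*_\C$ the Levi foliation $\mathcal{L}_\C$ is defined by either $\alpha=\sum_{j=1}^2\partial_{z_j}F_\C\, dz_j$ or $\beta=\sum_{j=1}^2\partial_{w_j}F_\C\, dw_j$, and $\alpha+\beta = dF_\C$ vanishes on $TM_\C$. A point $p\in M^*_\C$ lies in $\sing(\mathcal{L}_\C)$ exactly when $\alpha(p)=\mu\, dF_\C(p)$ for some $\mu\in\C$; comparing $dz$- and $dw$-components forces $\mu\in\{0,1\}$ (else $dF_\C(p)=0$, contradicting $p\in M^*_\C$). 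So $\sing(\mathcal{L}_\C)$ is the union of the loci $\{\partial_{z_1}F_\C=\partial_{z_2}F_\C=0\}\cap M_\C$ and $\{\partial_{w_1}F_\C=\partial_{w_2}F_\C=0\}\cap M_\C$. In the first locus, $\partial_{z_j}F_\C=\tfrac12\partial_{z_j}Q(z)$ plus higher quasihomogeneous order; since $Q$ has isolated singularity, for each $w$ near $0$ there are only finitely many solutions in $z$, so this locus is pure-dimensional of complex dimension $2$ in $\C^4$. Intersecting with $M_\C$ cuts it to dimension $1$. The other locus contributes symmetrically. Hence $\dm\sing(\mathcal{L}_\C)=1$, and since $\dm M^*_\C=3$ we obtain $\cod_{M^*_\C}(\sing(\mathcal{L}_\C))=2$.

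For (c), I would apply Lemma~\ref{lemaarturo} after the weighted blow-up $\pi\colon \widetilde{\C^4}\to\C^4$ at the origin with weights $(w_1,w_2,w_1,w_2)$, whose exceptional divisor is $E\cong\Proj(w_1,w_2,w_1,w_2)$. Because $F_\C$ is semiquasihomogeneous of degree $d$, in each affine chart of $\pi$ the pulled-back $F_\C$ factors as $t^d\cdot G$ with $t$ the exceptional variable, so $\tilde M_\C$ is quasi-smooth, $\tilde C:=\tilde M_\C\cap E$ equals $\{Q(z)+Q(w)=0\}\subset\Proj(w_1,w_2,w_1,w_2)$, and $\tilde C$ is invariant by $\tilde{\F}=\pi^*\mathcal{L}_\C$. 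I would then check the two hypotheses of Lemma~\ref{lemaarturo} for a generic leaf $S\subset \tilde C\setminus\sing(\tilde{\F})$: the leading model of $\mathcal{L}_\C$ has leaves $\{Q(z)=c,\ Q(w)=-c\}$, which are algebraic and therefore closed, and Lemma~\ref{lema2} upgrades this to closedness of the leaves of $\tilde{\F}$ on $S$. For the holonomy, I would use Saito's factorization (Lemma~\ref{lemaqh}) to enumerate the finitely many singularities of $\tilde{\F}$ on $S$, choose a transverse section $\Sigma$ at a generic $p_0\in S$, and exploit the cyclic-quotient structure of $\Proj(w_1,w_2,w_1,w_2)$: the holonomy group $G\subset\dif(\Sigma,0)$ becomes a finite cyclic group, generated by a linear element whose derivative at $0$ is a primitive root of unity of order dividing $d$, all of whose non-trivial powers remain primitive roots.

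I expect the main obstacle to be step (c), specifically the explicit identification of the holonomy along $S$ after the weighted blow-up. This requires chart-by-chart local computations in $\widetilde{\C^4}$, careful bookkeeping of the $\mathbb{Z}_{w_i}$-orbifold singularities of the exceptional divisor, and a verification that the perturbation $H_\C$ does not alter the linear part of the return map. Once Lemma~\ref{lemaarturo} is applied, the existence of a non-constant holomorphic first integral for $\mathcal{L}_\C$ follows, completing the proof of (c).
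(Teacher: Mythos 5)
Your overall strategy coincides with the paper's: part (a) from the isolated singularity of $Q(z)+Q(w)$ at $0\in\C^4$ plus the fact that $H_\C$ has quasihomogeneous order $>d$, part (b) from the explicit form of $\alpha$ given by Remark \ref{remark_fol}, and part (c) via a weighted blow-up with weights $(a,b,a,b)$, invariance of $\tilde C=\{Q(z)+Q(w)=0\}\subset\Proj(a,b,a,b)$, closedness of leaves from Lemma \ref{lema2}, and an application of Lemma \ref{lemaarturo}. Parts (a) and (b) are acceptable and essentially match the paper (the paper is equally terse about why intersecting $\{\partial_{z_1}F_\C=\partial_{z_2}F_\C=0\}$ with $M_\C$ actually drops the dimension to one, so I will not press that point).

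The genuine gap is in (c), which is where the content of the proposition lies. You set up the blow-up correctly but then assert that the holonomy group of $S=\tilde C\setminus\sing(\tilde{\mathcal{F}})$ ``becomes a finite cyclic group, generated by a linear element whose derivative at $0$ is a primitive root of unity of order dividing $d$,'' and you explicitly defer the verification. That structural prediction is not what the computation yields, and the deferred computation is precisely the proof. Using Saito's factorization (Lemma \ref{lemaqh}), $Q=\mu x^m y^n\prod_{\ell}(y^p-\lambda_\ell x^q)$ with $m,n\in\{0,1\}$, the set $\sing(\tilde{\mathcal{L}}_\C)\cap\tilde C$ decomposes into many connected components (the $C^{\ell}_{rs}$, plus components $C^{x_1}_{rs}$, $C^{y_1}_{rs}$ according to whether $m$ or $n$ equals $1$); by Zariski's description, $\pi_1(S,q_0)$ is generated by loops around each of these components subject to relations $\gamma_{\ell rs}^p=\delta_{\ell rs}^q$, so the holonomy group has many generators, not one. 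The proof then requires lifting each loop against the explicit blown-up $1$-form $\alpha_1$ and computing the linear part of each generator separately in each of the four cases $(m,n)\in\{0,1\}^2$ (one finds values such as $e^{-2(1+qk)\pi i/(p+q+pqk)}$, $e^{-2\pi i/p}$, $e^{-2\pi i/q}$ and $1$), and only then can Lemma \ref{lemaarturo} be invoked. None of this case analysis or computation appears in your proposal, so assertion (c) is not established. A smaller point: hypothesis (b) of Lemma \ref{lemaarturo} concerns every nontrivial element of $G$, not just a generating set, so even the cyclic shortcut you envisage would require you to exhibit the generator and its order explicitly rather than assert its existence.
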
 
 \begin{proof}
Let $M$ be as in Theorem \ref{teorema1}. Then $M$ is given by $M=\{F=0\}$, where 
$$F(z)=\mathcal{R}e(Q(z))+H(z,\bar{z}),$$
$Q$ is a complex quasihomogeneous polynomial of quasihomogeneous degree $d$ of type $(a,b)$ with an isolated singularity at $0\in\mathbb{C}^{2}$ and $H$ is a germ of real-analytic function at $0\in\mathbb{C}^2$ of order strictly greater than $d$.
It follows from Lemma \ref{lemaqh} that $Q$ can be written as  
\begin{equation}\label{qh}
Q(x,y)=\mu x^m y^n\prod_{\ell=1}^k (y^p-\lambda_{\ell} x^q),
\end{equation}
where $m,n,p,q\in\mathbb{Z}_+^*$,  $\mu,\lambda_{\ell}\in\C^*$ for each $\ell=1,...,k$, and $\gcd(p,q)=1$. Since $Q$ has an isolated singularity at $0\in\C^2$, then we necessarily 
that both $m$ and $n$ are either 0 or 1. 
\par On the other hand, since $Q$ has weights $(a,b)$ with $\gcd(a,b)=1$ we have each polynomial $(y^p-\lambda_{\ell}x^q)$ has also weights $(a,b)$, which implies that $aq=bp$. Since $p,q$ are relatively prime, we get $a=p$ and $b=q$. 
\par For simplicity, using (\ref{qh}), we write
$$
Q(x,y)=\mu x^m y^n\prod_{\ell=1}^k Q_\ell(x,y), 
$$
where $Q_\ell(x,y)=(y^p-\lambda_{\ell}x^q)$. Without loss of generality, we can assume that $Q$ has real coefficients. Then the complexification $F_\C$ of $F$ is given by
$$
F_\C(x,y,z,w)=\dfrac{1}{2}Q(x,y)+\dfrac{1}{2}Q(z,w)+H_\C(x,y,z,w).
$$
Since $Q$ has an isolated singularity at $0\in\C^2$, we get $M_\C=\{F_\C=0\}\subset (\C^4,0)$ has an isolated singularity at $0\in\C^4$ and so the algebraic dimension of $\sing(M)$ is zero. Hence item $(a)$ is proved. Consider the algebraic subvariety contained in  $\Proj(a,b,a,b)$
$$
V_{M_\C}=\{ Q(Z_0,Z_1)+Q(Z_2,Z_3)=0\},
$$
where $[Z_0:Z_1:Z_2:Z_3]\in\Proj(a,b,a,b)$. It is not difficult to see that  $\sing(M_\C)=\sing(V_{M_\C})$. Note that $V_{M_\C}$ can be considered as $V$-variety
$$
V_{M_\C}\subset Z \simeq \C^4/\mathbb{Z} (a,b,a,b).
$$
Now we consider the weighted blow-up $E:\tilde{Z}\rightarrow Z$, with weight $\delta=(a,b,a,b)$. Let $\tilde{M}_\C$ be the strict transform of $M_\C$ by $E$ and $D\simeq \Proj_\delta$ the exceptional divisor, with coordinates $(Z_0,Z_1,Z_2,Z_3)\in\C^4\setminus\{0\}$. The intersection of $\tilde{M_\C }$ with $\Proj_\delta$ is
$$
\tilde{C}:=\tilde{M}_\C \cap\Proj_\delta=\{ Q(Z_0,Z_1) + Q(Z_2,Z_3) = 0\}.
$$
It follows from Remark \ref{remark_fol} that $\mathcal{L}_\C$ can be defined by $\alpha\vert_{M^*_\C}=0$, where
\begin{eqnarray}\label{forma_1}
\alpha &=&Q(x,y)\left[\left(\dfrac{m}{x}-qx^{q-1}\displaystyle\sum_{\ell=1}^k \dfrac{\lambda_{\ell}}{Q_\ell(x,y)}\right)dx\right.+\left.\left( \dfrac{n}{y}+py^{p-1}\displaystyle\sum_{\ell=1}^k\dfrac{1}{Q_\ell(x,y)} \right)dy \right]+\theta
\end{eqnarray}
and  $\theta=2\left(\frac{\partial{H}_{\mathbb{C}}}{\partial{x}}dx+\frac{\partial{H}_{\C}}{\partial{y}}dy\right)$ is a holomorphic 1-form with order strictly greater than $d$.  It follows from (\ref{forma_1}) that $\sing(\mathcal{L}_\C)$ has codimension two proving item $(b)$. 
The rest of the proof is devoted to the proof of item $(c)$. Note that the leaves of $\mathcal{L}_{\C}$ are closed in $M^{*}_{\C}\setminus\sing(\mathcal{L}_{\C})$ by Lemma \ref{lema2}. To apply Lemma \ref{lemaarturo} we need calculate the holonomy group associated to $\mathcal{L}_{\C}$.                                                       
\par For each $i=1,2,3,4$, we have the affine open sets
$$
\tilde{U}_i=\C^4/ \mathbb{Z}_{a_i} (-a,...,\underset{i-th}{\underbrace{1}},...,-b).
$$
We work in $\tilde{U}_3$ with coordinates $(x_1,y_1,z_1,w_1)$. In this open subset, the blow-up $E$ has the following expression
$$
E(x_1,y_1,z_1,w_1)=(x_1z_1^a,y_1z_1^b,z_1^a,w_1z_1^b),
$$
with $D\cap \tilde{U}_3=\left\{ z_1=0\right\}/ \mathbb{Z}_{a}$. In this chart, the pull-back of $\alpha$ by $E$ is given by
\begin{eqnarray*}
E^*\alpha = z_1^{pm+qn+kpq-1} \alpha_1,
\end{eqnarray*}
where 
\begin{eqnarray}\label{alpha_teo1}
\alpha_1&=&Q(x_1,y_1)\left[ \left( \dfrac{mz_1}{x_1}-qx_1^{q-1}z_1\displaystyle\sum^k_{\ell=1}\dfrac{\lambda_{\ell}}{Q_\ell(x_1,y_1)} \right)dx_1+\right.  \\
&  &\left. \left( \dfrac{nz_1}{y_1}+py_1^{p-1}z_1\displaystyle\sum_{\ell=1}^k\dfrac{1}{Q_\ell(x_1,y_1)} \right)dy_1 +\right. \nonumber\\ 
& & \left. \left( pm+qn-pqx_1^q\displaystyle\sum_{\ell=1}^k\dfrac{\lambda_{\ell}}{Q_\ell(x_1,y_1)}+pqy_1^p\displaystyle\sum_{\ell=1}^k\dfrac{1}{Q_\ell(x_1,y_1)} \right)dz_1\right ]+z_1\theta_1\nonumber
\end{eqnarray}
and $\theta_1=E^*\theta/z_1^{{pm+qn+kpq}}$. The pull-back foliation $\tilde{\mathcal{L}}_\C$ is defined by $\alpha_1|_{\tilde{M}^{*}_{\C}}=0$. The intersection of $\tilde{C}$ with the open subset $\tilde{U}_3$ is
$$
\tilde{C}\cap \tilde{U}_3=\{ z_1=Q(x_1,y_1)+Q(1,w_1)=0\}/\mathbb{Z}_{a},
$$
which implies that $\tilde{C}$ is invariant by $\tilde{\mathcal{L}}_\C$ by (\ref{alpha_teo1}), and
$$
\sing (\tilde{\mathcal{L}}_\C)\cap \tilde{U}_3=\{ z_1=Q(x_1,y_1)=Q(1,w_1)=0\}/\mathbb{Z}_a.
$$
In the chart $\tilde{U}_4$, with coordinates $(x_2,y_2,z_2,w_2)$, the blow-up is
$$
E(x_2,y_2,z_2,w_2)= (x_2w_2^a,y_2w_2^b,z_2w_2^a,w_2^b)
$$
and $D\cap \tilde{U}_4=\{ w_2=0\}/\mathbb{Z}_b$. In this chart, the pull-back of $\alpha$ is
$$
\begin{array}{llr}
E^*\alpha = w_2^{pm+qn+kpq-1}\alpha_2,
\end{array}
$$
where 
\begin{eqnarray}\label{equa_2}
\alpha_2&=&Q(x_2,y_2)\left[
\left( \dfrac{mw_2}{x_2}-qx_2^{q-1}w_2\sum^k_{\ell=1}\dfrac{\lambda_{\ell}}{Q_\ell(x_2,y_2)} \right)dx_2+\right. \\
&  & \left. \left( \dfrac{nw_2}{y_2}+py_2^{p-1}w_2\sum_{\ell=1}^k\dfrac{1}{Q_\ell(x_2,y_2)} \right)dy_2+\right.\nonumber\\
& & \left. \left( pm+qn-pqx_2^q\sum_{\ell=1}^k\dfrac{\lambda_{\ell}}{Q_\ell(x_2,y_2)}+pqy_2^p\sum_{\ell=1}^k\dfrac{1}{Q_\ell(x_2,y_2)} \right)dw_2\right] +w_2\theta_2 \nonumber
\end{eqnarray}
and $\theta_2=E^*\theta/w_2^{{pm+qn+kpq}}$. The pull-back foliation $\tilde{\mathcal{L}}_\C$ is given by $\alpha_2|_{\tilde{M}^{*}_{\C}}=0$.
Similarly as before, the intersection of $\tilde{C}$ with the open subset $\tilde{U}_4$ is
$$
\tilde{C}\cap \tilde{U}_4=\{ w_2=Q(x_2,y_2)+Q(z_2,1)=0\}/\mathbb{Z}_{b},
$$
which is invariant by $\tilde{\mathcal{L}}_\C$ by (\ref{equa_2}), and
$$
\sing (\tilde{\mathcal{L}}_\C)\cap \tilde{U}_4=\{ w_2=Q(x_2,y_2)=Q(z_2,1)=0\}/\mathbb{Z}_b.
$$
Now, we focus in the chart $\tilde{U}_3$. In this open subset, the action of the group is given by
\begin{eqnarray*}
x_1&\mapsto& x_1,\\
y_1&\mapsto& e^{\frac{2bi\pi}{a}}y_1,\\
w_1&\mapsto & e^{\frac{2bi\pi}{a}}w_1.
\end{eqnarray*}
The exceptional divisor in this chart is given by
$$
\sing(D)\cap \tilde{U}_3 = \{ y_1=z_1=w_1=0\}/{\mathbb{Z}_a}
$$
and therefore the intersection of the singular set of $\tilde{\mathcal{L}}_\C$ with the singular set of the exceptional divisor is
$$
\sing(D)\cap \sing(\tilde{\mathcal{L}}_\C)\cap\tilde{
U}_3=\{ y_1=z_1=w_1=Q(1,0)=0\}/\mathbb{Z}_{a}.
$$
Due to the factorization of $Q$ given in (\ref{qh}), we investigated four cases.
\begin{itemize}
\item $m=n=1$. In this case, $Q(1,0)=0$ and, since $Q(1,w_1)$ is a complex polynomial in $w_1$, there exists another complex polynomial $\tilde{Q}$ such that $Q(1,w_1)=w_1\tilde{Q}(w_1)$ such that $\tilde{Q}(0)\neq 0$. Note that the power for $w_1$ may not be higher than one, because this would conflict with the fact that $n=1$ in the factorization of $Q$. Now, if $r$ is a root of $\tilde{Q}$, then $r\neq 0$ and therefore $(0,0,0,r)\in \sing(\tilde{\mathcal{L}}_\C)\cap\tilde{U}_3$ and $(0,0,0,r)\notin \sing(D)\cap \tilde{U}_3$. Hence, we get $\sing(D)\cap \tilde{U}_3\subsetneq \sing(\tilde{\mathcal{L}}_\C)\cap \tilde{U}_3$.
\item $m=0$, $n=1$. The same argument as the previous one holds in this case and therefore we have $\sing(D)\cap \tilde{U}_3\subsetneq \sing(\tilde{\mathcal{L}}_\C)\cap \tilde{U}_3$.
\item $m=1$, $n=0$. In this case, $Q(1,0)\neq 0$ and therefore $\sing(D)\cap \sing(\tilde{\mathcal{L}}_\C)\cap \tilde{U}_3=\emptyset$.
\item $m=0$. Same as before, we conclude that $\sing(D)\cap \sing(\tilde{\mathcal{L}}_\C)\cap \tilde{U}_3=\emptyset$.
\end{itemize}
We arrive to the same conclusions working in the chart $\tilde{U}_4$. In both cases we have shown that, either $\sing(D)\cap \sing(\tilde{\mathcal{L}}_\C)=\emptyset$ or that $\sing(D)\subsetneq \sing(\tilde{\mathcal{L}}_\C)$.
\par Consider the set $S:=\tilde{C}\setminus \sing (\tilde{\mathcal{L}}_\C)$. This set is a leaf of $\tilde{\mathcal{L}}_\C$. Let $q_0$ be a point in $S\setminus \sing(D)$ and a section $\Sigma$ transverse to $S$ passing through $q_0$. Working on the chart $\tilde{U}_3$, we may assume without loss of generality that $q_0=(1,0,0,0)$ and $\Sigma=\{ (1,0,t,0):\,\, t\in\C\}$. Let $G$ be the holonomy group of the leaf $S$ of $\tilde{\mathcal{L}}_\C$ in $\Sigma$. Recall that
$$
\sing(\tilde{\mathcal{L}}_\C)\cap \tilde{U}_3=\{ z_1 = Q(x_1,y_1)= Q(1,w_1)\}/\mathbb{Z}_{a}.
$$ 
This set splits into several connected components, separated in the following cases:

\begin{itemize}
\item \textit{$m=1$, $n=1$}. In this case, $$Q(x,y)=xy\prod_{\ell=1}^k Q_\ell(x,y),$$ where $Q_\ell(x,y)=(y^p-\lambda_{\ell} x^q)$, $\gcd(p,q)=1$ and $aq=pb=d$. The set $\sing(\tilde{\mathcal{L}}_\C)\cap \tilde{U}_3$ splits  as the union of the following connected components:
$$
C^{\ell}_{rs}=\{ z_1=Q_\ell(x_1,y_1)=w_1-\varepsilon^{(r)}_p(\lambda_s)=0\}/\mathbb{Z}_{a},
$$
$$
C^{x_1}_{rs}=\{ z_1=x_1=w_1-\varepsilon^{(r)}_p(\lambda_s)=0\}/\mathbb{Z}_{a},
$$
$$
C^{y_1}_{rs}=\{z_1=y_1=w_1-\varepsilon^{(r)}_p(\lambda_s)=0\}/\mathbb{Z}_{a},
$$
where $s,\ell\in\{1,...,k\}$ and $r\in\{1,...,p\}$ and for each $r$, $\varepsilon^{(r)}_p (\lambda_s)$ is an $p$-$th$ root of $\lambda_s$. According to \cite{zariski1932}, the fundamental group $\pi_1(S,q_0)$ may be written in terms of generators and its relations as
$$
\pi_1(S,q_0)=\left\langle \gamma_{\ell rs},\delta_{\ell rs},\xi_{rs},\tau_{rs}:\gamma_{\ell rs}^p=\delta^q_{\ell rs}\right\rangle _{\begin{tiny}\begin{array}{ll}\ell,s=1,...,k\\ r=1,...,p\end{array}\end{tiny}}
$$
where, for each $\ell,r,s$, the elements $\gamma_{\ell rs}$ and $\delta_{\ell rs}$ are loops around the connected component $C^{\ell}_{rs}$ of $\sing(\tilde{\mathcal{L}}_\C)\cap \tilde{U}_3$, $\xi_{rs}$ are loops around $C^{x_1}_{rs}$ and $\tau_{rs}$ a loop around $C^{y_1}_{rs}$. If $G$ is the holonomy group of the leaf $S$ of $\tilde{\mathcal{L}}_\C$ in the section $\Sigma$, then
$$
G=\displaystyle\left\langle f_{\ell rs},g_{\ell rs},h_{rs},k_{rs}\right\rangle _{\begin{tiny}\begin{array}{ll}\ell,s=1,...,k\\r=1,...,p\end{array}\end{tiny}}
$$
where $f_{\ell rs}$, $g_{\ell rs}$, $h_{rs}$ and $k_{rs}$ correspond to the equivalence classes of the loops $\gamma_{\ell rs}$, $\delta_{\ell rs}$, $\xi_{rs}$, $\tau_{rs}$ in $\pi_1 (S, q_0)$, respectively. Each one of these loops lifts up to $\Gamma_{\ell rs}(t)$, $\Delta_{\ell rs}(t)$, $\Xi_{rs}(t)$, $\Upsilon_{rs}(t)$, respectively, under the condition that each one of these belong on the leaves of $\tilde{\mathcal{L}}_{\C}$ and that this foliation is defined by $\alpha_1|_{M^{*}_{\C}}=0$ (see for instance (\ref{alpha_teo1})). We have the coefficients of the linear terms of the holonomy maps are given by 
\begin{eqnarray*}
f'_{\ell rs}(0)&=&e^{-\frac{2(1+qk)}{p+q+pqk}\pi i},\\
g'_{\ell rs}(0)&=& e^{-\frac{2}{q}\left(\frac{p+pqk}{p+q+pqk}\right)\pi i},\\
h'_{rs}(0)&=&1,\\
k'_{rs}(0)&=& e^{-2\left(\frac{1+pk}{p+q+pqk}\right)\pi i}.
\end{eqnarray*}
According to Lemma \ref{lemaarturo}, the foliation $\tilde{\mathcal{L}}_\C$ has a holomorphic non-constant first integral and the proof in this case is finished.

\item $m=0,n=1$. In this case, $$Q(x,y)=y\prod_{\ell=1}^k Q_\ell (x,t),$$ where $Q_\ell = (y^p-\lambda_\ell x^q)$, $\gcd(p,q)=1$ and $aq=pb=d$. The set $\sing(\tilde{\mathcal{L}}_\C)\cap \tilde{U}_3$ splits as the union of the following connected components:
$$
C^{\ell}_{rs}=\{z_1=Q_\ell (x_1,y_1)=w_1-\varepsilon^{(r)}_p(\lambda_s)=0\}/\mathbb{Z}_{a},
$$
$$
C^{y_1}_{rs}=\{ z_1 = y_1 = w_1-\varepsilon^{(r)}_p (\lambda_s)=0\}/\mathbb{Z}_{a},
$$
where $s,\ell\in\{1,...,k\}$, $r\in\{1,...,p\}$ and, for each $r$, $\varepsilon^{(r)}_p(\lambda_s)$ is a $p$-$th$ root of $\lambda_s$. The group $\pi_1(S,q_0)$ is written in terms of generators and its relations as
$$
\pi_1 (S,q_0)=\left\langle \gamma_{\ell rs},\delta_{\ell rs},\tau_{rs}:\gamma^p_{\ell rs}=\delta^q_{\ell rs}\right\rangle _{\begin{tiny}\begin{array}{ll}\ell,s=1,...,k\\r=1,...,p\end{array}\end{tiny}}
$$
where, for each $\ell,r,s$, $\gamma_{\ell rs}$ and $\delta_{\ell rs}$ are loops around $C^{\ell}_{rs}$ and $\tau_{rs}$ a loop around $C^{y_1}_{rs}$. If $G$ is the holonomy group of the leaf $S$ of $\tilde{\mathcal{L}}_\C$ in the section $\Sigma$ then
$$
G=\left\langle f_{\ell rs},g_{\ell rs},k_{rs}\right\rangle _{\begin{tiny}\begin{array}{ll}\ell,s=1,...,k\\r=1,...,p\end{array}\end{tiny}}
$$
where $f_{\ell rs}$, $g_{\ell rs}$ and $k_{rs}$ correspond to the equivalence classes of the loops $\gamma_{\ell rs}$, $\delta_{\ell rs}$, $\tau_{rs}$ in $\pi_1 (S, q_0)$, respectively. Each one of these loops lifts up to $\Gamma_{\ell rs}(t)$, $\Delta_{\ell rs}(t)$, $\Upsilon_{rs}(t)$, respectively, under the condition that each one of these belong on the leaves of $\tilde{\mathcal{L}}_{\C}$ and that this foliation is defined by $\alpha_1|_{M^{*}_{\C}}=0$ (see for instance (\ref{alpha_teo1})), we have the coefficients of the linear terms of the holonomy maps are given by 
\begin{eqnarray*}
f'_{\ell rs}(0)&=&e^{-\frac{2\pi i}{p}},\\
g'_{\ell rs}(0)&=&e^{-\frac{2 \pi i}{q}},\\
k'_{rs}(0)&=&1.
\end{eqnarray*}
 Using Lemma \ref{lemaarturo}, the proof in this case is finished.

\item $m=1,n=0$. In this case $$Q(x,y)=x\prod_{\ell=1}^k Q_\ell (x,t),$$ where $Q_\ell = (y^p-\lambda_\ell x^q)$, $\gcd(p,q)=1$ and $aq=pb=d$. The set $\sing(\tilde{\mathcal{L}}_\C)\cap \tilde{U}_3$ splits as the union of the following connected components:
$$
C^{\ell}_{rs}=\{z_1=Q_\ell (x_1,y_1)=w_1-\varepsilon^{(r)}_p(\lambda_s)=0\}/\mathbb{Z}_{a},
$$
$$
C^{y_1}_{rs}=\{ z_1 = x_1 = w_1-\varepsilon^{(r)}_p (\lambda_s)=0\}/\mathbb{Z}_{a},
$$
where $s,\ell\in\{1,...,k\}$, $r\in\{1,...,p\}$ and, for each $r$, $\varepsilon^{(r)}_p(\lambda_s)$ is a $p$-$th$ root of $\lambda_s$. The group $\pi_1(S,q_0)$ is written in terms of generators and its relations as
$$
\pi_1 (S,q_0)=\langle \gamma_{\ell rs},\delta_{\ell rs},\xi_{rs}:\gamma^p_{\ell rs}=\delta^q_{\ell rs}\rangle _{\begin{tiny}\begin{array}{ll}\ell,s=1,...,k\\r=1,...,p\end{array}\end{tiny}}
$$
where, for each $\ell,r,s$, $\gamma_{\ell rs}$ and $\delta_{\ell rs}$ are loops around $C^{\ell}_{rs}$ and $\xi_{rs}$ a loop around $C^{x_1}_{rs}$.  If $G$ is the holonomy group of the leaf $S$ of $\tilde{\mathcal{L}}_\C$ in the section $\Sigma$ then
$$
G=\left\langle f_{\ell rs},g_{\ell rs},h_{rs}\right\rangle _{\begin{tiny}\begin{array}{ll}\ell,s=1,...,k\\r=1,...,p\end{array}\end{tiny}}
$$
where $f_{\ell rs}$, $g_{\ell rs}$ and $h_{rs}$ correspond to the equivalence classes of the loops $\gamma_{\ell rs}$, $\delta_{\ell rs}$, $\xi_{rs}$ in $\pi_1 (S, q_0)$, respectively. Each one of these loops lifts up to $\Gamma_{\ell rs}(t)$, $\Delta_{\ell rs}(t)$, $\Xi_{rs}(t)$, respectively, under the condition that each one of these belong on the leaves of $\tilde{\mathcal{L}}_{\C}$ and that this foliation is defined by $\alpha_1|_{M^{*}_{\C}}=0$ (see for instance (\ref{alpha_teo1})), we have the coefficients of the linear terms of the holonomy maps are given by 
\begin{eqnarray*}
f'_{\ell rs}(0)&=&e^{-\frac{2\pi i}{q}},\\
g'_{\ell rs}(0)&=&e^{-\frac{2 \pi i}{p}},\\
k'_{rs}(0)&=&1,
\end{eqnarray*}
Again by Lemma \ref{lemaarturo}, the proof in this case is finished.

\item $m=0,n=0$. In this case, $Q(x,y)=\displaystyle\prod_{\ell=1}^k Q_\ell (x,t)$, where $Q_\ell = (y^p-\lambda_\ell x^q)$, $\gcd(p,q)=1$ and $aq=pb=d$. The set $\sing(\tilde{\mathcal{L}}_\C)\cap \tilde{U}_3$ splits as the union of the following connected components:
$$
C^{\ell}_{rs}=\{z_1=Q_\ell (x_1,y_1)=w_1-\varepsilon^{(r)}_p(\lambda_s)=0\}/\mathbb{Z}_{a},
$$
where $s,\ell\in\{1,...,k\}$, $r\in\{1,...,p\}$ and, for each $r$, $\varepsilon^{(r)}_p(\lambda_s)$ is a $p$-$th$ root of $\lambda_s$. The group $\pi_1(S,q_0)$ is written in terms of generators and its relations as
$$
\pi_1 (S,q_0)=\langle \gamma_{\ell rs},\delta_{\ell rs}:\gamma^p_{\ell rs}=\delta^q_{\ell rs}\rangle _{\begin{tiny}\begin{array}{ll}\ell,s=1,...,k\\r=1,...,p\end{array}\end{tiny}}
$$
where, for each $\ell,r,s$, $\gamma_{\ell rs}$ and $\delta_{\ell rs}$ are loops around $C^{\ell}_{rs}$.  If $G$ is the holonomy of the leaf $S$ of $\tilde{\mathcal{L}}_\C$ in the section $\Sigma$ then
$$
G=\langle f_{\ell rs},g_{\ell rs}\rangle _{\begin{tiny}\begin{array}{ll}\ell,s=1,...,k\\r=1,...,p\end{array}\end{tiny}}
$$
where $f_{\ell rs}$, $g_{\ell rs}$ correspond to the equivalence classes of the loops $\gamma_{\ell rs}$, $\delta_{\ell rs}$ in $\pi_1 (S, q_0)$, respectively. Each one of these loops lifts up to $\Gamma_{\ell rs}(t)$, $\Delta_{\ell rs}(t)$, respectively, under the condition that each one of these belong on the leaves of $\tilde{\mathcal{L}}_{\C}$ and that this foliation is defined by $\alpha_1|_{M^{*}_{\C}}=0$ (see for instance (\ref{alpha_teo1})), we have the coefficients of the linear terms of the holonomy maps are given by 
\begin{eqnarray*}
f'_{\ell rs}(0)&=&e^{-\frac{2\pi i}{q}},\\
g'_{\ell rs}(0)&=&e^{-\frac{2 \pi i}{p}}.
\end{eqnarray*}
Finally Lemma \ref{lemaarturo} implies that  $\tilde{\mathcal{L}}_\C$ has a holomorphic non-constant first integral.
\end{itemize}
\end{proof}
\subsection{Proof of Theorem \ref{teorema1}}

\par Note that Proposition \ref{proposition_integral} implies that the hypotheses of Theorem \ref{cerveaulinsneto1}, part $(b)$ are verified. Then there exists a germ of holomorphic foliation $\mathcal{F}_M$ with a non-constant holomorphic first integral $f\in\mathcal{O}_2$ such that $M=\{\mathcal{R}e(f)=0\}$. Without loss of generality, we can assume that $f$ is not a power in $\mathcal{O}_2$ and therefore so $\mathcal{R}e(f)$ is irreducible by Lemma \ref{lema1}. This implies $$\mathcal{R}e(f)=U\cdot F,$$ where $U\in\mathcal{A}_{n\mathbb{R}}$ and $U(0)\neq 0$. Since $F(z)=\mathcal{R}e(Q(z))+H(z,\bar{z})$ and $Q$ is a quasihomogeneous polynomial of quasihomogeneous degree $d$ with weights $(a,b)$, we can write $f$ as the decomposition
$$
f=\displaystyle \sum_{\ell\geq d} f_\ell,
$$
where each $f_\ell$ is a quasihomogeneous polynomial of quasihomogeneous degree $\ell$ with weights $(a,b)$ (see \cite[p. 193]{arnold1974}). If the power series of $U$ at $0\in\C^2$ is 
$$
U(z)=U(0)+\tilde{U}(z)=U(0)+\displaystyle\sum_{\substack{\mu_1+\mu_2\geq 1\\ \nu_1+\nu_2\geq 1}} c_{\mu_1\mu_2\nu_1\nu_2}z_1^{\mu_1} z_2^{\mu_2}\overline{z_1}^{\nu_1}\overline{z_2}^{\nu_2}
$$ then 
\begin{eqnarray*}
\mathcal{R}e(f)&=&(U(0)+\tilde{U})(\mathcal{R}e(Q)+H)\\
&=& U(0)\mathcal{R}e(Q)+\tilde{U}\mathcal{R}e(Q)+U(0)H+\tilde{U}H.
\end{eqnarray*}
We need to investigate what terms on the previous equality have quasihomogeneous degree $d$ with weights $(a,b)$, the sum of these terms will be equal to $\mathcal{R}e(f_d)$. Set $\tilde{H}=U(0)H+\tilde{U}H$, note that the quasihomogeneous terms of $\tilde{H}$ has order strictly greater than $d$. Writing 
 $$
 \mathcal{R}e(f)=\mathcal{R}e(f_d)+\sum_{\ell> d} \mathcal{R}e(f_\ell),
 $$
 we have, for all $\lambda\in\mathbb{C}^{*}$
\begin{eqnarray*}
 \mathcal{R}e(f(\lambda^az_1,\lambda^bz_2))&=&U(0)\mathcal{R}e(Q(\lambda^az_1,\lambda^bz_2))\\
                                                                          &    &+\tilde{U}(\lambda^az_1,\lambda^bz_2)\mathcal{R}e(Q(\lambda^az_1,\lambda^bz_2))+\tilde{H}(\lambda^az_1,\lambda^bz_2)\\
     &=&U(0)\mathcal{R}e(\lambda^d Q(z_1,z_2))\\
& &+\tilde{U}(\lambda^az_1,\lambda^bz_2)\mathcal{R}e(\lambda^d Q(z_1,z_2))+\tilde{H}(\lambda^az_1,\lambda^bz_2)\\
	 &=& U(0)\left(\dfrac{\lambda^dQ(z)+\overline{\lambda}^d\overline{Q(z)}}{2} \right)\\
	& &+\left( c_{1000}\lambda^az_1+c_{0100}\lambda^bz_2+c_{0010}\overline{\lambda}^a\overline{z_1}\right.\\
	& &\left.+c_{0001}\overline{\lambda}^b\overline{z_2}+... \right)\mathcal{R}e(\lambda^d Q(z))+\tilde{H}(\lambda^az_1,\lambda^bz_2)\\
&=& \underbrace{U(0)\left(\dfrac{\lambda^dQ(z)+\overline{\lambda}^d\overline{Q(z)}}{2} \right)}_{\text{generalized degree is }d}\\
& &+\underbrace{ c_{1000}\lambda^a\lambda^dz_1\mathcal{R}e(Q(z))+... +\tilde{H}(\lambda^az_1,\lambda^bz_2)}_{\text{generalized degree is greater than }d}
\end{eqnarray*}
which means that $f_d(z)=U(0)Q(z)$, hence $f(z)=U(0)Q(z)+\displaystyle\sum_{\ell>d}f_{\ell}$. Without any loss of generality we may assume that $U(0)=1$. In particular, $\mu(f,0)=\mu(Q,0)$, since $Q$ has an isolated singularity at the origin. According to Theorem \ref{lemaarnold}, there exists a germ of biholomorphism $\phi:(\mathbb{C}^{2},0)\rightarrow(\mathbb{C}^{2},0)$ such that
$$f\circ\phi^{-1}(z)=Q(z)+\displaystyle\sum_{j}c_je_j(z),$$ where $c_j\in\C$ and $e_j$ are elements of the monomial basis of $A_Q$ with $\deg(e_j)>d$. Hence $$\phi(M)=\left\{\mathcal{R}e\left(Q(z)+\sum_{j}c_je_j(z)\right)=0\right\}$$ and this finishes the proof of Theorem \ref{teorema1}. 

\begin{example}
Now we give an application of Theorem \ref{teorema1}. Consider the complex quasihomogeneous polynomial $$Q(x,y)=x^a+\lambda x^2 y^2+y^b\,\,\,\,\,\text{where}\,\,\,\,\,a\geq 4,\,\, b\geq 5,\,\, \lambda\neq 0.$$ 
We have $Q$ has isolated singularity at $0\in\mathbb{C}^2$ with $\mu(Q,0)=a+b+1$.
According to \cite[p. 33]{arnold1974}, every semiquasihomogeneous function $f$ with principal part $Q(x,y)$ is right equivalent to $Q(x,y)$.
Consequently, if we consider $F(x,y)=\mathcal{R}e(Q(x,y))+H(x,y)$ as a germ of real-analytic function at $0\in\mathbb{C}^2$ such that $M=\{F=0\}$ is Levi-flat then Theorem \ref{teorema1} implies that $M$ is biholomorphic to germ at $0\in\mathbb{C}^2$ of real-analytic Levi-flat hypersurface defined by 
$$M'=\{\mathcal{R}e(x^a+\lambda x^2 y^2+y^b)=0\}\,\,\,\,\,\text{where}\,\,\,\,\,a\geq 4,\,\, b\geq 5,\,\, \lambda\neq 0.$$ 
\end{example}

\section{Isochore normal forms for holomorphic functions}

Let $f\in\mathcal{O}_n$ be a germ of holomorphic function with an isolated singularity at $0\in\C^n$  such that its Hessian form 
$$
h:= \sum_{1\leq i,j\leq n} \dfrac{\partial^2 f(0)}{\partial z_i\partial z_j}z_iz_j
$$
is non-degenerate. The classical Morse's lemma asserts that $f$ is right equivalent to $h$.
\par Let $\omega = a(z)dz_1\wedge ...\wedge dz_n$, $a(0)\neq 0$ be a holomorphic volume form on a coordinate system $(z_1,...,z_n)$ on an open set around $0\in\C^n$. A coordinate system $(x_1,\ldots,x_n)$ is 
\emph{isochore} or \emph{volume preserve}, if $\omega$ can be written as $dx_1\wedge\ldots\wedge dx_n$ on these coordinates. Then, we say that a biholomorphism $\phi:(\C^n,0)\rightarrow (\C^n,0)$ is \textit{isochore} or \textit{volume-preserving} if the coordinate system induced by it is isochore.
\par In 1977, J. Vey \cite{vey1977} has posed the following question: It is possible to find a coordinate system isochore such that $f$ is right equivalent to $h$?. 
Vey answered negatively to question and proved the following result. 
\begin{lemma}[Vey \cite{vey1977}]\label{vey}
Let $f\in\mathcal{O}_n$, $n\geq 2$, with isolated singularity at $0\in\C^n$ such that its Hessian form $h$ is non-degenerate. Then there exists a germ of a volume-preserving biholomorphism $\phi:(\mathbb{C}^n,0)\to(\mathbb{C}^n,0)$ and a germ of an automorphism $\psi\in\mathcal{O}_1$, with $\psi(0)=0$, such that
$$
f\circ \phi^{-1}=\psi\circ h,\,\,\,\,\,\,\,\psi(t)=t+c_2t^2+c_3t^3+\ldots
$$
The function $\psi$ is uniquely determined by $f$ up to a sign.
\end{lemma}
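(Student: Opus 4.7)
The plan is to reduce Vey's lemma to a normal form problem for volume forms along the level sets of $h$, and then solve that via a Moser-type path method.

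First I would apply the classical complex Morse lemma to obtain a (not necessarily isochore) biholomorphism $\phi_0$ with $f\circ \phi_0^{-1}=h$. Pulling back the standard volume form $dz_1\wedge\cdots\wedge dz_n$ by $\phi_0^{-1}$ produces a volume form $\omega=J(y)\,dy_1\wedge\cdots\wedge dy_n$ with $J\in\mathcal{O}_n$, $J(0)\neq 0$. The theorem is then equivalent to the following: given such a $J$, find a biholomorphism $\Phi$ of $(\mathbb{C}^n,0)$ and an automorphism $\tilde{\psi}\in\mathcal{O}_1$ with $\tilde{\psi}(0)=0$, such that
\[
\Phi^*(dy_1\wedge\cdots\wedge dy_n)=J(y)\,dy_1\wedge\cdots\wedge dy_n \quad\text{and}\quad h\circ \Phi = \tilde{\psi}\circ h.
\]
Indeed, the original $\phi$ is then recovered as $\phi=\Phi\circ\phi_0$, and $\psi=\tilde{\psi}^{-1}$.

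Next I would introduce the Gelfand–Leray form: since $h$ is Morse, on a punctured neighborhood of $0$ there exists a holomorphic $(n-1)$-form $\eta$ with $dh\wedge \eta = dy_1\wedge\cdots\wedge dy_n$, and $\eta$ restricts to a holomorphic volume form on each Milnor fiber $\{h=c\}$. The crucial invariant is the period
\[
K(c)\;=\;\int_{\gamma(c)} J\,\eta,
\]
where $\gamma(c)$ is the vanishing $(n-1)$-cycle in $\{h=c\}$. This $K$ is holomorphic in $c$ with $K(0)\neq 0$ because $J(0)\neq 0$. Writing $K_0$ for the same integral with $J\equiv 1$, I would define $\tilde{\psi}$ as the unique solution, with $\tilde{\psi}(0)=0$, of the ODE
\[
\tilde{\psi}'(c)\,K_0\!\bigl(\tilde{\psi}(c)\bigr)=K(c),
\]
so that $\tilde{\psi}'(h)\,K_0(\tilde{\psi}(h))$ and $K(h)$ agree as fiber integrals. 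The sign ambiguity in $\tilde{\psi}$ (and hence in $\psi$) corresponds exactly to the $\mathbb{Z}/2$ of $\tilde{\psi}'(0)=\pm\sqrt{K(0)/K_0(0)}$, matching the uniqueness-up-to-sign claim.

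Finally, I would run Moser's path method on the family of volume forms
\[
\omega_t \;=\; \bigl((1-t)\,\tilde{\psi}'(h)\,K_0^{-1}K \;+\; t\,J\bigr)\,dy_1\wedge\cdots\wedge dy_n,
\]
adapted so that $\omega_0$ corresponds to the target Jacobian $\tilde{\psi}'(h)$ and $\omega_1=\omega$. Looking for a time-dependent vector field $X_t$ tangent to the fibers of $h$ (this tangency is what forces $h\circ\Phi=\tilde{\psi}\circ h$ after integration), the Moser equation reduces to a relative cohomological equation of the form $d(i_{X_t}\omega_t)=\mu_t\,\omega_t$ with $i_{X_t}dh=0$. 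By the choice of $\tilde{\psi}$, the fiber integrals of the right-hand side vanish on every Milnor fiber; the main obstacle is precisely to use this vanishing, together with the known structure of the relative de Rham cohomology of the Brieskorn lattice of the Morse singularity (here just $\mathbb{C}[[c]]$ generated by $\eta$), to solve the equation holomorphically near $0$. Integrating the resulting $X_t$ from $t=0$ to $t=1$ yields $\Phi$, completing the proof. The delicate point is the holomorphic solvability of this cohomological equation along the fibers of $h$ — everything else is formal or bookkeeping.
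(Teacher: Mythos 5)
The paper does not actually prove this lemma: it is quoted from Vey \cite{vey1977} (with Fran\c{c}oise's alternative proof \cite{francoise1978} also cited), so there is no internal argument to compare against. Your outline reproduces the architecture of Fran\c{c}oise's proof --- reduce by the complex Morse lemma to normalizing a volume form $J\,dy_1\wedge\cdots\wedge dy_n$ along the fibers of $h$, fix $\tilde{\psi}$ by matching Gelfand--Leray periods over the vanishing cycle, and absorb the rest by a Moser path tangent to the fibers of $h$ --- and that architecture is the right one.

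Two problems remain. First, a concrete error: for $h=y_1^2+\cdots+y_n^2$ the Gelfand--Leray form $dy/dh$ has weight $n-2$ under the scaling $y\mapsto\lambda y$, $c\mapsto\lambda^2 c$, so the period over the vanishing cycle behaves like $c^{(n-2)/2}$. Hence $K(c)$ is \emph{not} holomorphic with $K(0)\neq 0$ once $n\geq 3$, and for odd $n$ it is not even single-valued. The correct invariant is the ratio $K/K_0$, which does extend holomorphically and is nonzero at $0$; the ODE for $\tilde{\psi}$ must be set up with that normalization (it then integrates to $\tilde{\psi}(c)=c\,u(c)$ with $u(0)\neq0$, and the residual ambiguity is exactly the ``up to a sign'' in the statement). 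Second, and more seriously, the entire analytic content of the lemma sits in the step you explicitly defer: the holomorphic solvability of the relative cohomological equation, i.e.\ that an $n$-form whose fiber integrals over the vanishing cycle all vanish can be written as $dh\wedge d\beta$ with $\beta$ \emph{convergent}. This is the Brieskorn--Sebastiani freeness of the Brieskorn lattice (of rank $\mu=1$ here, generated by the class of $dy$) together with a division theorem with convergence control; without it the Moser equation cannot be solved and $\Phi$ is not produced. As written, the proposal is a correct roadmap of the Fran\c{c}oise argument but has a genuine gap at its central step, plus the fixable misstatement about the holomorphy of $K$.
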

\par This result was also proved by J-P Fran\c{c}oise \cite{francoise1978}. The approach used by Fran\c{c}oise was later generalized by A. Szawlowski \cite{szawlowski2012} to study of complex quasihomogeneous polynomials and to the germ of a holomorphic function that is right equivalent to the product of coordinates $z_1\cdot\ldots\cdot z_n$, as stated by the following theorem.

\begin{theorem}[Szawlowski \cite{szawlowski2012}]\label{szawlowski}
Let $f\in\mathcal{O}_n$, $n\geq 2$ be a germ of holomorphic function that is right equivalent to the product of all coordinates: $f\sim_R z_1\cdot\ldots\cdot z_n$. Then there exists a germ of a volume-preserving biholomorphism $\Phi:(\mathbb{C}^n,0)\to(\mathbb{C}^n,0)$ and a germ of an automorphism $\Psi\in\mathcal{O}_1$, with $\Psi(0)=0$, such that

$$
f\circ\Phi(z) = \Psi (z_1\cdot ...\cdot z_n).
$$
The function $\Psi$ is uniquely determined by $f$ up to a sign.
\end{theorem}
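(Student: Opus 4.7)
My plan is to adapt J-P Fran\c{c}oise's homotopy-theoretic proof of Vey's lemma to the non-isolated singularity $g := z_1\cdots z_n$. Fix a biholomorphism $\phi$ realizing the right-equivalence $f\circ\phi = g$, set $\omega := dz_1\wedge\cdots\wedge dz_n$, and let $\alpha := \phi^*\omega$. Producing the desired pair $(\Phi,\Psi)$ is equivalent to producing a biholomorphism $\Theta:(\mathbb{C}^n,0)\to(\mathbb{C}^n,0)$ and an automorphism $\Lambda\in\text{Aut}(\mathbb{C},0)$ with
\[
g\circ\Theta=\Lambda\circ g \qquad\text{and}\qquad \Theta^*\omega=\alpha,
\]
since $\Phi := \phi\circ\Theta^{-1}$ is then volume-preserving and $\Psi := \Lambda^{-1}$ satisfies $f\circ\Phi = \Psi\circ g$.

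The key step is a Fran\c{c}oise-type division lemma for $g$: every germ of holomorphic $n$-form at $0\in\mathbb{C}^n$ decomposes uniquely as
\[
\alpha = A(g)\,\omega + dg\wedge d\beta,
\]
with $A\in\mathcal{O}_1$ and $\beta$ a germ of holomorphic $(n-2)$-form, where $A$ is determined by $\alpha$. Because $\sing(g) = \bigcup_{i<j}\{z_i=z_j=0\}$ has positive dimension, the partial derivatives $\partial g/\partial z_i = z_1\cdots\widehat{z_i}\cdots z_n$ do not form a regular sequence, so the classical Brieskorn-lattice proof of the analogous statement for Morse germs fails. One instead computes the top cohomology of the relative de Rham complex of $g$ directly, using the multigraded/toric structure of $g$ to identify it as a rank-one free $\mathcal{O}_1$-module generated by the class of $\omega$ via multiplication by $g$. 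This is the main technical obstacle.

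Granted the decomposition, define $\Lambda$ by $\Lambda(0) = 0,\ \Lambda'(t) := A(t)$, which is an automorphism because $A(0)\neq 0$ (from $\alpha(0)\neq 0$ and $dg(0)=0$). Construct $\Theta$ as a composition of two steps. First, the radial scaling $\Theta_1(z) := c(z)\cdot z$ with $c := (\Lambda(g)/g)^{1/n}$ (a holomorphic unit, since $\Lambda(g)/g$ is) satisfies $g\circ\Theta_1 = c^n\cdot g = \Lambda\circ g$ and, via a short computation using Euler's identity $\sum_j z_j\,\partial g/\partial z_j = ng$, also $\Theta_1^*\omega = \Lambda'(g)\,\omega = A(g)\,\omega$. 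Second, apply Moser's homotopy trick to the path $\alpha_t := A(g)\,\omega + t\,dg\wedge d\beta$, $t\in[0,1]$, seeking a holomorphic time-dependent vector field $X_t$ tangent to the fibres of $g$ (i.e.\ $X_t(g)\equiv 0$) with $\mathcal{L}_{X_t}\alpha_t = -dg\wedge d\beta$. The tangency condition forces $\iota_{X_t}\alpha_t \in dg\wedge\Omega^{n-2}$, and solvability of the resulting cohomological equation is precisely the relative de Rham vanishing statement underlying the decomposition lemma; integrating yields a fibre-preserving $\Theta_M$ with $\Theta_M^*\alpha = A(g)\,\omega = \Theta_1^*\omega$. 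Setting $\Theta := \Theta_1\circ\Theta_M^{-1}$ yields both desired identities.

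For uniqueness, the coefficient $A$ (hence $\Lambda$, hence $\Psi = \Lambda^{-1}$) is determined by $f$ via the decomposition lemma, which is invariant under the action of the volume-preserving right-equivalence group. The residual ambiguity comes from the stabilizer of $g$ inside the volume-preserving biholomorphisms, generated by coordinate permutations and diagonal sign changes $z_i\mapsto\epsilon_i z_i$ with $\prod_i\epsilon_i = \pm 1$, acting on the base as $g\mapsto\pm g$; this produces exactly the sign ambiguity $\Psi\leftrightarrow -\Psi(-\,\cdot\,)$ claimed in the statement.
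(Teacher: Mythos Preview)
The paper does not give its own proof of this theorem: it is quoted from Szawlowski \cite{szawlowski2012} and used as a black box in the proof of Corollary~\ref{coro1}. The only comment the paper makes is that Szawlowski's argument generalizes Fran\c{c}oise's homotopy proof of Vey's lemma, which is precisely the route you outline; so your proposal is aligned with the cited source rather than with anything proved in the present paper.

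Your sketch is structurally sound. The reduction to a pair $(\Theta,\Lambda)$, the radial rescaling $\Theta_1$ (the Jacobian computation via Euler's identity indeed gives $\Theta_1^*\omega=(c^n+ng\,c^{n-1}c')\,\omega=\Lambda'(g)\,\omega$), and the Moser step all work as you describe. The genuine content, as you acknowledge, is the division lemma $\alpha=A(g)\,\omega+dg\wedge d\beta$, equivalent to showing that the top relative de~Rham cohomology of $g=z_1\cdots z_n$ is free of rank one over $\mathcal{O}_1$ with generator $[\omega]$; since the Jacobian ideal of $g$ is not $\mathfrak{m}$-primary this is a nontrivial calculation that you have left as an assertion. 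Your uniqueness argument is also incomplete: the stabilizer of $g$ among volume-preserving germs is much larger than permutations and diagonal signs (for instance $(z_1,z_2,\ldots,z_n)\mapsto(u\,z_1,u^{-1}z_2,\ldots,z_n)$ for any unit $u$), so the sign ambiguity in $\Psi$ must be deduced from the uniqueness of $A$ in the decomposition lemma together with the induced action on the base, not from a description of the stabilizer itself.
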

\par Note that the above normal form for $f$ is a germ of holomorphic function whose singular set is of positive dimension (non-isolated singularity). In general, normal forms of germs of functions with non-isolated singularities are very difficult of find, even for a change of coordinates non-isochore.   
\section{Theorem \ref{teorema2}}
\par To prove Theorem \ref{teorema2} we use the following result proved in \cite{perez2011}, although it is not stated as a separate theorem. We restate it here for completeness.  
\begin{theorem}[Fern\'andez-P\'erez \cite{perez2011}]\label{perez1}
Let $M=\{F=0\}$ be a germ of an irreducible singular real-analytic  Levi-flat hypersurface at $0\in\mathbb{C}^n$, $n\geq 2$, such that
\begin{enumerate}
\item $F(z)=\mathcal{R}e(P(z))+H(z,\bar{z})$,
\item $P$ is a complex homogeneous polynomial of degree $k$ with an isolated singularity at $0\in\mathbb{C}^{n}$,
\item $j_0^k(H)=0$ and $H(z,\bar{z})=\overline{H(\bar{z},z)}$.
\end{enumerate}
Then there exists a germ at $0\in\mathbb{C}^n$ of holomorphic codimension-one foliation $\mathcal{F}_M$ tangent to $M$. Moreover, the foliation $\mathcal{F}_M$ has a non-constant holomorphic first integral $f(z)=P(z)+O(|z|^{k+1})$, and $M=\{\mathcal{R}e(f)=0\}$.
\end{theorem}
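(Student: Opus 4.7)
The plan is to prove Theorem \ref{perez1} by complexifying $F$ and the Levi foliation $\mathcal{L}$, and then invoking Cerveau--Lins Neto's Theorem \ref{cerveaulinsneto1}, following the same blueprint as the proof of Proposition \ref{proposition_integral} but with the simplifications afforded by $P$ being homogeneous. Assuming (as we may) that $P$ has real coefficients, the complexification is
$$
F_\C(z,w) = \tfrac{1}{2}P(z) + \tfrac{1}{2}P(w) + H_\C(z,w).
$$
Since $P$ has an isolated singularity at $0 \in \C^n$, the complexified variety $M_\C = \{F_\C = 0\} \subset (\C^{2n}, 0)$ has an isolated singularity at the origin, so the algebraic dimension of $\sing(M)$ equals $0 \leq 2n-4$ for every $n \geq 2$. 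Thus the algebraic-dimension hypothesis of Theorem \ref{cerveaulinsneto1} is verified.

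Next, by Remark \ref{remark_fol}, $\mathcal{L}_\C$ on $M_\C^*$ is defined by $\alpha|_{M_\C^*} = 0$ with $\alpha = \sum_{j=1}^n \frac{\partial F_\C}{\partial z_j}\, dz_j$. Using $j_0^k(H) = 0$, the leading $(k-1)$-homogeneous part of $\alpha$ in $(z,w)$ is $\tfrac{1}{2} \sum_j \frac{\partial P}{\partial z_j}(z)\, dz_j$, which vanishes only at $z = 0$ because $P$ has an isolated singularity. A direct dimension count then shows that $\sing(\mathcal{L}_\C)$ inside $M_\C^*$ has complex codimension at least $n$. For $n \geq 3$ this already gives $\cod_{M_\C^*} \sing(\mathcal{L}_\C) \geq 3$, and part (a) of Theorem \ref{cerveaulinsneto1} directly yields the foliation $\mathcal{F}_M$ together with a non-constant holomorphic first integral.

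For $n = 2$, we instead verify part (b) of Theorem \ref{cerveaulinsneto1} by producing a non-constant holomorphic first integral for $\mathcal{L}_\C$. By Lemma \ref{lemaqh}, a homogeneous polynomial $P$ of degree $k$ with isolated singularity at $0 \in \C^2$ factors as $P(x,y) = \mu \prod_{\ell=1}^k (y - \lambda_\ell x)$ with pairwise distinct $\lambda_\ell \in \C^*$. Perform the ordinary blow-up $E: \tilde Z \to (\C^4, 0)$ at the origin (the weight-$(1,1,1,1)$ case of Section 4). The strict transform $\tilde M_\C$ meets the exceptional divisor $\Proj^3$ along $\tilde C = \{P(Z_0, Z_1) + P(Z_2, Z_3) = 0\}$, which is invariant by the induced foliation $\tilde{\mathcal{L}}_\C$. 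Exactly as in the proof of Proposition \ref{proposition_integral}, set $S = \tilde C \setminus \sing(\tilde{\mathcal{L}}_\C)$ and fix a transverse disk $\Sigma$ at a base point of $S$. Lemma \ref{lema2} ensures closedness of the leaves of $\tilde{\mathcal{L}}_\C$ in $S$, and an explicit local computation in each affine chart around the irreducible components of $\sing(\tilde{\mathcal{L}}_\C)$ (one component per linear factor of $P$) shows that the linear parts of the generators of the holonomy group $G \subset \dif(\Sigma, 0)$ are roots of unity satisfying the hypothesis of Lemma \ref{lemaarturo}. That lemma then produces the required holomorphic first integral.

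With both hypotheses of Theorem \ref{cerveaulinsneto1} verified, we obtain a codimension-one holomorphic foliation $\mathcal{F}_M$ tangent to $M$ with a non-constant holomorphic first integral $f$ such that $M = \{\mathcal{R}e(f) = 0\}$. We may assume $f$ is not a power in $\mathcal{O}_n$; then Lemma \ref{lema1} gives irreducibility of $\mathcal{R}e(f)$ in $\mathcal{A}_{n\mathbb{R}}$, and since $F$ is irreducible, $\mathcal{R}e(f) = U \cdot F$ with $U \in \mathcal{A}_{n\mathbb{R}}$ and $U(0) \neq 0$. Expanding $f = \sum_{\ell \geq k} f_\ell$ into homogeneous parts and $U = U(0) + \tilde U$, and comparing degree-$k$ homogeneous parts of $\mathcal{R}e(f) = (U(0) + \tilde U)(\mathcal{R}e(P) + H)$, using $j_0^k(H) = 0$, forces $f_k = U(0) \cdot P$. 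Rescaling $f$ to absorb $U(0)$ we conclude $f(z) = P(z) + O(|z|^{k+1})$. The main obstacle is the $n = 2$ case: the existence of a holomorphic first integral for $\mathcal{L}_\C$ does not follow from the codimension estimate, and producing it requires the blow-up together with an explicit holonomy computation, which rely crucially on the factorization of Lemma \ref{lemaqh} and on Lemma \ref{lemaarturo}.
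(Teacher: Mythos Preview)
The paper does not supply its own proof of this statement: it is quoted verbatim from \cite{perez2011} and used as a black box in the proof of Theorem~\ref{teorema2}. So there is no ``paper's proof'' to compare against. That said, your reconstruction is faithful to the methods of both \cite{perez2011} and the present paper, and it is essentially correct.

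For $n\geq 3$, your codimension count $\cod_{M^*_\C}\sing(\mathcal{L}_\C)\geq n\geq 3$ is the right mechanism: since $P$ has an isolated singularity, the leading parts $\partial P/\partial z_j$ form a regular sequence in the $z$-variables, so the zero locus of $\alpha$ has codimension $n$ in $\C^{2n}$ and its trace on $M^*_\C$ has codimension $n$ as well (the symmetric statement holds for $\beta$). This is exactly the route taken in \cite{perez2011}, via part~(a) of Theorem~\ref{cerveaulinsneto1}.

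For $n=2$ you correctly recognise that the homogeneous case is the weight-$(1,1)$ specialisation of Proposition~\ref{proposition_integral}, so the blow-up/holonomy argument of that proposition (hence Lemma~\ref{lemaarturo}) applies verbatim with $p=q=1$. One small inaccuracy: the factorisation $P(x,y)=\mu\prod_{\ell=1}^{k}(y-\lambda_\ell x)$ with $\lambda_\ell\in\C^*$ is not quite general. Lemma~\ref{lemaqh} with $p=q=1$ still allows a single factor of $x$ or of $y$ (the exponents $m,n\in\{0,1\}$), and the $\lambda_\ell$ need only be pairwise distinct, not nonzero. This does not affect the argument, since Proposition~\ref{proposition_integral} already handles all four cases $m,n\in\{0,1\}$.

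Your final step, extracting $f_k=U(0)\,P$ from $\mathcal{R}e(f)=U\cdot F$ by comparing homogeneous parts, is exactly the argument of Section~5.1 specialised to ordinary degree, and is correct.
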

\subsection{Proof of Theorem \ref{teorema2}}
Let $M=\{F=0\}$ be a germ at $0\in\mathbb{C}^n$, $n\geq 2$, of an irreducible real-analytic Levi-flat hypersurface  such that
$$
F(z)=\mathcal{R}e (z_1^2+...+z_n^2)+H(z,\bar{z}),
$$ 
where $j_0^2(H)=0$, $H(z,\bar{z})=\overline{H(\bar{z},z)}$. Since $P(z_1,\ldots,z_n)=z^{2}_1+\ldots+z_n^2$ is a complex homogeneous polynomial of degree 2,  we can apply Theorem \ref{perez1}, so that there exists $f\in\mathcal{O}_n$ such that 
$f(z)=z_1^2+\ldots+z^2_n+O(|z|^3)$ and $M=\{\mathcal{R}e(f)=0\}$. 
On the other hand, applying Lemma \ref{vey} to $f$, there exists a volume-preserving $\phi:(\mathbb{C}^n,0)\to(\mathbb{C}^n,0)$ and an automorphism $\psi_1\in\mathcal{O}_1$, with $\psi_1(0)=0$, such that
$$
f\circ \phi^{-1}=\psi_1 (2P),\,\,\,\,\,\,\,\psi_1(t)=t+c_2t^2+c_3t^3+\ldots
$$
Taking $\psi:=\psi_1(t/2)\in\mathcal{O}_1$, we have $f\circ\phi^{-1}=\psi\circ P$. Finally, $\phi(M)=\{\mathcal{R}e(\psi (z^{2}_1+\ldots+z_n^2))=0\}$ and the proof of Theorem \ref{teorema2} ends. 

\section{Proof of Theorem \ref{teorema3} and Corollary \ref{coro1}}
Here we will use the same idea of the proof of Theorem \ref{teorema1}. First of all, note that, in dimension two, under the change of variables $z_1=y+ix$, $z_2=y-ix$, and we have $z_1z_2= x^2+y^2$ and then Theorem \ref{teorema3} follows from Theorem \ref{teorema2}, because the singular set of $M_{\C}$ is the origin of $\C^4$. Therefore, we only consider the case $n\geq 3$.
\begin{proposition}\label{propo_teorema3}
Let $M$ be a germ of a singular real-analytic Levi-flat hypersurface at $0\in\mathbb{C}^n$, $n\geq 3$, satisfying the hypotheses of Theorem \ref{teorema3}. Then $\mathcal{L}_\C$ has a non-constant holomorphic first integral.
\end{proposition}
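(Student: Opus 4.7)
The plan is to follow the strategy of Proposition \ref{proposition_integral}: blow up $M_\C$ at the origin, show that the pulled-back foliation leaves $\tilde{C} := \tilde{M}_\C \cap E$ invariant, and verify the hypotheses of Lemma \ref{lemaarturo}. Since $z_1 \cdots z_n$ is homogeneous of degree $n$ with weights $(1,\ldots,1)$, the natural tool is the standard blow-up $\pi : \tilde{X} \to \C^{2n}$ at the origin, with exceptional divisor $E \simeq \Proj^{2n-1}$.

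First I would identify the strict transform $\tilde{M}_\C$ and its intersection with the exceptional divisor. A short computation gives
\[
\tilde{C} = \{[Z_1 : \cdots : Z_n : W_1 : \cdots : W_n] \in \Proj^{2n-1} : Z_1 \cdots Z_n + W_1 \cdots W_n = 0\},
\]
and its singular locus, by a Jacobian calculation, is exactly the union of coordinate subspaces where at least two of the $Z_i$'s and at least two of the $W_j$'s vanish simultaneously, matching the $V_{ijk\ell}$ configuration in the hypothesis on $\sing(M_\C)$.

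Next, in the affine chart $\tilde{U}_1$ with coordinates $(y_1, \ldots, y_{2n})$ (where $z_1 = y_1$, $z_j = y_j y_1$ for $j \geq 2$, and $w_k = y_{n+k} y_1$), I would factor $y_1^{n-1}$ out of $\pi^* \alpha$ (with $\alpha = \sum_j (\partial F_\C/\partial z_j)\, dz_j$ as in Remark \ref{remark_fol}) to obtain the 1-form $\alpha_1$ defining $\tilde{\mathcal{L}}_\C$. A direct inspection then shows that $\tilde{C}$ is $\tilde{\mathcal{L}}_\C$-invariant and that $\sing(\tilde{\mathcal{L}}_\C) \cap \tilde{C} \cap \tilde{U}_1$ decomposes as a union of analytic subvarieties of the form $\{y_j = 0\} \cap \tilde{C}$ for various coordinates $y_j$. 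I would then pick $p_0 \in S := \tilde{C} \setminus \sing(\tilde{\mathcal{L}}_\C)$ and a transverse section $\Sigma$, describe $\pi_1(S, p_0)$ by loops around the irreducible components of $\sing(\tilde{\mathcal{L}}_\C) \cap \tilde{C}$, lift each loop to a leaf via $\alpha_1|_{\tilde{M}^*_\C} = 0$, and compute the linear part of the induced holonomy on $\Sigma$ — these should turn out to be primitive $n$-th roots of unity. Combined with Lemma \ref{lema2} (closedness of leaves), Lemma \ref{lemaarturo} then yields the required non-constant holomorphic first integral for $\mathcal{L}_\C$.

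The main obstacle is the explicit holonomy calculation: $\pi_1(S)$ is the fundamental group of the complement of a reducible configuration of codimension-one subvarieties inside a smooth hypersurface of $\Proj^{2n-1}$, and each generator must be lifted carefully through $\alpha_1$ while tracking the linear part. The hypothesis $\sing(M_\C) \subset \sing(\mathcal{L}_\C)$ plays an essential role here: it guarantees that the singular locus of $\tilde{C}$ is already absorbed into $\sing(\tilde{\mathcal{L}}_\C)$, so no additional generators are contributed by the singularities of $\tilde{C}$, and one may restrict attention to the hyperplane components.
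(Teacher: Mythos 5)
Your overall strategy (blow up, check invariance of the intersection with the exceptional divisor, compute holonomy, invoke Lemma \ref{lemaarturo}) is the same as the paper's, but there is a genuine gap: a single point blow-up at the origin is not enough here. The set-up of Lemma \ref{lemaarturo} requires the strict transform $\tilde{M}_\C$ to be \emph{smooth}, and after one blow-up of $0\in\C^{2n}$ it is not: in the affine chart where $\pi_1(r,\ell)=(\ell_1 r_1,\ldots,\ell_1 r_n,\ell_1,\ell_1\ell_2,\ldots,\ell_1\ell_n)$ the strict transform is $\{r_1\cdots r_n+\ell_2\cdots\ell_n+R_1=0\}$, which is still singular along the $(2n-4)$-dimensional sets $W_{i,j,k,s}=\{r_i=r_j=\ell_k=\ell_s=0\}$ (the proper transforms of the $V_{ijk\ell}$). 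This is precisely the feature that distinguishes Theorem \ref{teorema3} from Theorem \ref{teorema1}, where $M_\C$ has an isolated singularity and one (weighted) blow-up suffices. Your remark that the hypothesis $\sing(M_\C)\subset\sing(\mathcal{L}_\C)$ ``absorbs'' the singularities of $\tilde{C}$ into $\sing(\tilde{\mathcal{L}}_\C)$ addresses the singular set of the foliation, but not the smoothness of the ambient variety on which the leaf $S$ and the lifted paths must live; the holonomy construction of Lemma \ref{lemaarturo} cannot be applied in that chart as you propose.

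The paper fixes this by performing a second blow-up $\pi_\ell$ centered along the components $W_{i,j,k,s}$ (explicitly along $W_{1,2,2,3}$, the others following by permuting coordinates), and only then computes the holonomy of the leaf $S=(\tilde{\tilde{M}}_\C\cap D)\setminus\sing(\tilde{\tilde{\mathcal{L}}}_\C)$ around the components $\mathcal{C}_{i,n+j}$, obtaining linear parts $h'_{\delta_{i,j}}(0)=e^{-2\pi i/(n+2)}$ --- primitive $(n+2)$-th roots of unity, not the $n$-th roots you anticipate. To repair your argument you would need to insert this second blow-up (or otherwise desingularize $\tilde{M}_\C$) before the holonomy computation, and redo the residue calculation in the resulting charts.
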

\begin{proof}
\par Let $M$ be as in Theorem \ref{teorema3}. Then, $M$ is given by $\{F=0\}$ where $$F(z)=\mathcal{R}e(z_1\cdots z_n) + H(z_1,...,z_n),$$ and $j^n_0(H)=0$. Its complexification is
\begin{equation}\label{complex_1}
F_\C(z,w)=\dfrac{1}{2}(z_1\cdots z_n) + \frac{1}{2}(w_1\cdots w_n)+H_\C(z,w),
\end{equation}
and therefore $M_\C =\{F_\C=0\}\subset (\C^{2n},0)$. By hypotheses, $\sing(M_\C)$ is the union  of the sets 
$$
V_{ijk\ell}=\{ z_i=z_j=w_k=w_{\ell}=0\},\,\,\,\,\,1\leq i< j\leq n,\,\,\,\,1\leq k< \ell\leq n.$$
Since $V_{ijk\ell}$ has complex dimension $2n-4$, then the algebraic dimension of $\sing(M)$ is $2n-4$. 
\par On the other hand, it follows from Remark \ref{remark_fol} that $\mathcal{L}_\C$ is given by $\alpha\vert_{M^*_\C}=0$, where 
$$\alpha=\sum^{n}_{i=1} \dfrac{\partial F_\C}{\partial z_i}dz_i.$$
Using (\ref{complex_1}) we can write $\alpha$ in coordinates $(r_1,\ldots,r_n)\in\mathbb{C}^n$ as 
$$
\alpha = \frac{1}{2}\displaystyle\sum^n_{i=1} \left( r_1\cdots\widehat{r}_i\cdots r_n +\dfrac{\partial R}{\partial r_i} \right)dr_i,
$$
where $\dfrac{\partial R}{\partial r_i}=2\dfrac{\partial H_{\C}}{\partial r_i}$ for all $i=1\ldots,n$. Then we can consider that $\mathcal{L}_{\C}$ is defined by $\tilde{\alpha}|_{M^{*}_{\C}}=0$, where   
 $$\tilde{\alpha}= \displaystyle\sum^n_{i=1} \left( r_1\cdots\widehat{r}_i\cdots r_n +\dfrac{\partial R}{\partial r_i} \right)dr_i.$$
\par Let us prove that $\mathcal{L}_\C$ has a non-constant holomorphic first integral. We start with the blow-up $\pi_1$ at $0\in\C^n$ with exceptional divisor $D_1\cong\mathbb{P}^{2n-1}$. Let $[Z:Y]=[Z_1:\ldots:Z_n:Y_1:\ldots:Y_n]$ be the homogeneous coordinates of $D_1$. The intersection of $\tilde{M}_\C=\pi_1^*(M_\C)$ with the divisor $D_1$ is the algebraic hypersurface
$$
Q_1:=\tilde{M}_\C\cap D_1 = \{[Z:Y]\in\mathbb{P}^{2n-1}:\,\,Z_1\cdots Z_n+Y_1\cdots Y_n = 0\}.
$$
In the chart $(W,(r,\ell)=(r_1,...,r_n,\ell_1,...,\ell_n))$ of $\tilde{\C}^{2n}$ where
$$
\pi_1(r,\ell)=(\ell_1r_1,...,\ell_1r_2,...,\ell_1r_n,\ell_1,\ell_1\ell_2,...,\ell_1\ell_n).
$$
Then
$$
\begin{array}{rcl}
\tilde{F}_\C(r,\ell)&=&F_\C\circ \pi_1(r,\ell)=\ell_1^nr_1\cdots r_n+\ell_1^n\ell_2\cdots \ell_n+R(\pi_1(r,\ell))\\
&=&\ell_1^n(r_1\cdots r_n + \ell_2\cdots \ell_n + \ell_1R_1(r,\ell)),
\end{array}
$$
where $R_1(r,\ell)=R(\pi_1(r,\ell))/\ell_1^{n+1}$. Therefore
$$
\tilde{M}_\C\cap W = \{r_1\cdots r_n + \ell_2\cdots \ell_n + R_1(r,\ell) = 0\},
$$
and
$$
Q_1\cap W = \{\ell_1 = r_1\cdots r_n + \ell_2\cdots \ell_n = 0\}.
$$
On the other hand, the pull-back of $\tilde{\alpha}$ by $\pi_1$ is
$$
\begin{array}{rcl}
\pi^*_1(\tilde{\alpha}) & = & \displaystyle\sum_{i=1}^n \ell_1^{n-1}\left( r_1\dots \widehat{r}_i\cdots r_n\right) d(\ell_1r_i)+\theta\\
\\
&=&\ell_1^{n-1}\left( \displaystyle\sum^n_{i=1}\ell_1r_1\cdots \widehat{r}_i\cdots r_ndr_i + nr_1\cdots r_nd\ell_1+\ell_1\theta_1\right),
\end{array}
$$
where $\theta_1=\theta/\ell_1^n$. In the chart $W$, the exceptional divisor is written as $D_1=\{\ell_1=0\}$ and $\tilde{\mathcal{L}}_\C$ is given by $\alpha_1|_{\tilde{M}^{*}_{\C}}=0$, where 
$$
\alpha_1=\sum^n_{i=1}\ell_1r_1\cdots \widehat{r}_i\cdots r_ndr_i+nr_1\cdots r_nd\ell_1+\ell_1\theta_1.
$$
Note that $\tilde{M}_\C\cap D_1 $ is invariant  by $\tilde{\mathcal{L}}_\C$ and moreover 
$$
\sing(\tilde{M}_\C)\cap W = \bigcup_{i,j,k,s}
W_{i,j,k,s},
$$
where
$$
W_{i,j,k,s}:=\{ r_i=r_j=\ell_k=\ell_s=0\}_{\footnotesize{
\begin{array}{r}
1\leq i,j,k,s \leq n\\
\end{array}}}\,\,\,\text{where}\,\,\,\,i\neq j,k\neq s\,\,\,\,\text{and}\,\,\,
k\neq 1,s\neq 1.
$$
Consider the irreducible component $W_{1,2,2,3}$ of $\sing(\tilde{M}_\C)\cap W$. We make a blow-up along this component; the process of desingularization around the other components of $\sing(\tilde{M}_\C)\cap W$ are similarly obtained by exchanging coordinates. Let $E$ be the exceptional divisor of $\pi_{\ell}:\tilde{\C}^{2n}\rightarrow \C^{2n}$. Let $\tilde{\tilde{M}}_\C$ be the strict transform of $\tilde{M}_\C$ and $\tilde{\tilde{\mathcal{L}}}_\C$ be the pull-back of $\tilde{\mathcal{L}}_\C$ by $\pi_{\ell}$ respectively. Let $U$ be an open subset with coordinates $(x_1,...,x_{2n})$ where the blow-up is 
$$
\pi_{\ell}(x_1,...,x_{2n})=(x_1x_{n+3},x_2x_{n+3},x_3,...,x_n,x_{n+1},x_{n+2}x_{n+3},x_{n+3},x_{n+4},...,x_{2n}),
$$
we have
$$
\tilde{\tilde{F}}_\C=\tilde{F}_\C\circ\pi_{\ell}=x_{n+1}^nx_{n+3}^2(x_1\cdots x_n+x_{n+1}x_{n+2}x_{n+4}\cdots x_{2n}+x_{n+1}x_{n+3}R_2),
$$
where $R_2=R_1(\pi_{\ell} (x_1,...,x_{2n}))/x_{n+3}^3$. Therefore
$$
\tilde{\tilde{M}}_\C\cap U = \{x_1\cdots x_n+x_{n+2}x_{n+4}\cdots x_{2n}+x_{n+1}x_{n+2}R_2=0\}
$$
hence
$$ \tilde{\tilde{M}}_{\C}\cap E\cap U = \{x_{n+1}=x_{n+3}=x_1\cdots x_n+x_{n+2}x_{n+4}\cdots x_{2n} = 0\}.
$$
The pull-back of $\alpha_1$ by $\pi_{\ell}$ is
\begin{eqnarray*}
\pi^*_{\ell}(\alpha_1 )& = & x_{n+3}\left(x_2\cdots x_nx_{n+1}x_{n+3}dx_1+x_1x_3\cdots x_n x_{n+1}x_{n+3}dx_2+\right.\\
&  & \displaystyle\sum_{i=3}^n \dfrac{x_1\cdots x_nx_{n+1}x_{n+3}}{x_i}dx_i +\\
& &\left.nx_1\cdots x_nx_{n+3}dx_{n+1}+2x_1x_2\cdots x_nx_{n+1}dx_{n+3}+x_{n+1}x_{n+3}\theta_2\right),
\end{eqnarray*}
where $\theta_2=\theta_1/x_{n+3}^2$. In the chart $U$, the exceptional divisor is written as 
$$
D=D_1\cup D_2=\{x_{n+1}=0\}\cup \{x_{n+3}=0\}
$$
and $\tilde{\tilde{{\mathcal{L}}}}_\C$ is given by $\alpha_2|_{\tilde{\tilde{M}}^{*}_\C}=0$, where 
\begin{eqnarray}\label{alpha2}
\alpha_2 &= &x_2\cdots x_nx_{n+1}x_{n+3}dx_1+x_1x_3\cdots x_n x_{n+1}x_{n+3}dx_2+\nonumber\\
& &\sum_{i=3}^n \dfrac{x_1\cdots x_nx_{n+1}x_{n+3}}{x_i}dx_i+ \\
& &nx_1\cdots x_nx_{n+3}dx_{n+1}+2x_1x_2\cdots x_nx_{n+1}dx_{n+3}+x_{n+1}x_{n+3}\theta_2,\nonumber
\end{eqnarray}
which allows us to conclude that $\tilde{\tilde{M}}_\C\cap D$ is invariant by $\tilde{\tilde{{\mathcal{L}}}}_\C$. The singularities of the foliation $\tilde{\tilde{{\mathcal{L}}}}_\C$ on the exceptional divisor in this chart are given by
$$
\sing (\tilde{\tilde{{\mathcal{L}}}}_\C)\cap D\cap U = \left\{ x_{n+1}=x_{n+3}=x_1\cdots x_n = x_{n+2}x_{n+4}\cdots x_{2n}=0\right\}.
$$
If we define $\mathcal{C}_{i,n+j}=\{ x_{n+1}=x_{n+3}=x_i=x_{n+j}=0\}\cong\C^{2(n-2)}$, then we can write
$$
\sing(\tilde{\tilde{{\mathcal{L}}}}_\C)\cap D\cap U = \displaystyle\bigcup_{\begin{tiny}
\begin{array}{c}
1\leq i,j\leq n\\j\neq 1,3\end{array}\end{tiny}} \mathcal{C}_{i,n+j}.
$$
Since $ \tilde{\tilde{M}}_\C\cap D$ is invariant by $\tilde{\tilde{{\mathcal{L}}}}_\C$, then
$$
S:= (\tilde{\tilde{M}}_\C\cap D)\setminus \sing(\tilde{\tilde{{\mathcal{L}}}}_\C)
$$
is a leaf of $\tilde{\tilde{{\mathcal{L}}}}_\C$. Let $G$ be its holonomy group, and $p_0\in S$ given by
$$
p_0=(x_1,...,x_n,x_{n+1},x_{n+1},x_{n+3},x_{n+4},...,x_{2n})=(1,...,1,0,-1,0,1,...,1).
$$
Take $\Sigma$ the transversal section through $p_0$ given by
$$
\Sigma = \{ (1,...,1,\lambda,-1,\lambda,1,...,1):\,\, \lambda\in\C\}.
$$
Let $\delta_{i,j}(\theta)$ be a loop around $\mathcal{C}_{i,n+j}$, for $1\leq i\leq n$ and $4\leq j \leq n$, and $\delta_{i,2}(\theta)$ a loop around $C_{i,n+2}$, $1\leq i\leq n$ with $\theta\in [0,1]$. Each one of these loops lifts up to $\Gamma _{i,j}(\lambda,\theta)$ and $\Gamma _{i,2}(\lambda,\theta)$, respectively, such that $\Gamma_{i,j}(0,\theta)=0$, $\Gamma_{i,j}(\lambda,0)=\lambda$ and $\Gamma_{i,j}(\lambda,\theta)=\displaystyle\sum_{k=1}^\infty \delta^{i,j}_k(\theta)\lambda^k$, for $i=1,...,n$ and $j=2,4,5,...,n$. The holonomy map with respect to these loops are
$$
h_{\delta_{i,j}}(\lambda)=\Gamma_{i,j}(\lambda,1).
$$
Using the expression of $\alpha_2$ given in (\ref{alpha2}), we get
$$
h'_{\delta_{i,j}}(0)=e^{-\frac{2\pi i}{n+2}}, \text{ for }i=1,...,n\text{ and } j=2,4,5,...,n.
$$
It follows from Lemma \ref{lemaarturo} that $\mathcal{L}_\C$ has a non-constant holomorphic first integral.
\end{proof}
\subsection{Proof of Theorem \ref{teorema3}}
\par Note that Proposition \ref{propo_teorema3} implies that the hypotheses of Theorem \ref{cerveaulinsneto1}, part $(b)$ are verified. Then we get $f\in\mathcal{O}_n$ such that the foliation $\mathcal{F}$ given by $df=0$ is tangent to $M$ and $M=\{\mathcal{R}e(f)=0\}$.
Without loss of generality we may assume that $f$ is not a power in $\mathcal{O}_n$ and therefore $\mathcal{R}e(f)$ is irreducible in $\mathcal{A}_{n\mathbb{R}}$. We must have that
 $Re(f)=U\cdot F$ where $U\in\mathcal{A}_{n\mathbb{R}}$, $U(0)\neq 0$. 
If the Taylor expansion of $f$ at $0\in\C^n$ is $$f=\displaystyle\sum_{j\geq n} f_j,$$ where $f_j$ is a homogeneous polynomial of degree $j$, then 
$$
\mathcal{R}e(f_n) = j_0^n(\mathcal{R}e(f))=j_0^n(U\cdot F)=U(0)\mathcal{R}e(z_1\cdots z_n),
$$
which means $f_n(z)=U(0) z_1\cdots z_n$. We can assume that $U(0)=1$ and therefore $$f(z)=z_1\cdots z_n+O(|z|^{n+1}).$$ 
This finishes the proof of Theorem \ref{teorema3}. 
\subsection{Proof of Corollary \ref{coro1}}
If we assume that $f(z)\sim_R z_1\cdots z_n$, it follows from Theorem \ref{szawlowski} that there exists a germ of a volume-preserving biholomorphism $\Phi:(\C^n,0)\rightarrow (\C^n,0)$ and a germ of an automorphism $\Psi:(\C,0)\rightarrow (\C,0)$, such that
$$
f\circ \Phi^{-1}(z) = \Psi(z_1\cdots z_n).
$$
Hence
$$
\Phi(M)=\{\mathcal{R}e(\Psi(z_1\cdots z_n))=0\}.
$$
 This finishes the proof of Corollary \ref{coro1}.

\vskip 0.2 in

\noindent{\it\bf Acknowledgments.--}
The authors gratefully acknowledges the many helpful suggestions of Rog\'erio Mol (UFMG) during the preparation of the paper. 


\end{document}